\newtheorem{theorem}{Theorem}[section]
\newtheorem{lemma}[theorem]{Lemma}
\newtheorem{corollary}[theorem]{Corollary}
\newtheorem{proposition}[theorem]{Proposition}
\theoremstyle{definition}
\newtheorem{definition}[theorem]{Definition}
\newtheorem{assu}[theorem]{Assumption}
\newtheorem{rem}[theorem]{Remark}
\theoremstyle{remark}
\numberwithin{equation}{section}
\newcommand{\field}[1]{\mathbb{#1}}
\newcommand{\R}{\field{R}}
\DeclareMathOperator{\dist}{dist}  
\DeclareMathOperator{\supp}{supp}  
\DeclareMathOperator{\Int}{int}
\DeclareMathOperator*{\essinf2}{ess\, inf}
\newcommand{\x}{\mathrm{x}}  
\newcommand{\y}{\mathrm{y}} 
\newcommand{\z}{\mathrm{z}}  
\begin{document}
\author{Joachim Rehberg}

\author{Elmar Schrohe} 
\address{E. Schrohe. Leibniz University Hannover, Institute of Analysis, Welfengarten 1, 30167 Hannover, Germany }
\email{schrohe@math.uni-hannover.de} 

\address{J. Rehberg, Weierstrass Institute, 
Mohrenstr. 39, 10117 Berlin, Germany}
\email{joachim.rehberg@wias-berlin.de}

\title[Elliptic Operators on Domains with Buried Boundary Parts]%
{Optimal Sobolev Regularity for Second Order Divergence Elliptic Operators on Domains with Buried Boundary Parts}
			
\subjclass[2020]{35J15, 35J25}	
\keywords{second order divergence operators, mixed boundary conditions, discontinuous coefficients, buried boundary parts}

\begin{abstract}
We study the regularity of solutions of elliptic second order boundary value problems on a bounded domain $\Omega$ in $\mathbb R^3$.
The coefficients are not necessarily continuous and the boundary conditions may be mixed, i.e. Dirichlet on one part $D$ of the boundary and Neumann on the complementing part. The peculiarity is that $D$ is partly `buried' in $\Omega$ in the sense that the topological interior of $\Omega \cup D$ properly contains $\Omega$.
The main result is that the singularity of the solution along the border of the buried contact behaves exactly
as the singularity for the solution of a mixed boundary value problem along the border between the Dirichlet 
and the Neumann boundary part.
\end{abstract}
\maketitle

\section{Introduction}

The most disruptive force in semiconductor devices is heat \cite{Ratho}, \cite{fischer}. It leads to the segregation of chemical compounds and eventually to the destruction of the device. 
In the mathematical theory of semiconductor modeling there exists up to now only one thermodynamically consistent model that includes the electron/hole transport combined with heat transfer \cite{Alb}. Unfortunately this system lacks parabolicity and therefore defies so far a rigorous mathematical analysis.
Our aim in this work is considerably more modest. We study a physical quantity such as the temperature or the electrostatic potential, subject to a corresponding elliptic equation. To fix ideas, consider the \emph{stationary} heat equation or
Poisson's equation for the electrostatic potential -- here as part of a semiconductor model, see  \cite{Gaj}.

The crucial feature is that the device contains a so called `buried contact' $B$ within a much less conducting
material: Think of a film of silver that lies inside the device but extends  to its boundary. Its thickness is assumed to be negligible compared to the other parameters of the device, so that it can be idealized as an interior {\em surface} with (possibly non-smooth) boundary.
In the stationary heat problem one may think of the film to be heated to a certain temperature from `outside' and in the 
semiconductor model  that a certain voltage is applied to the `outer part' of the film.

The question we are addressing here is the following: Given suitable data, which regularity can we expect near $B$ for the solutions to the above equations?

For similar questions in semiconductor modeling, the threshold regularity is known to be $W^{1,3+\epsilon}$ as observed in the  pioneering paper \cite{Gaj}.
In \cite{dissreh} this was shown to lead to a satisfactory analysis of  the van Roosbroeck system 
even for the case where surface charge densities and avalanche generation are taken into account. 
Here, in the analysis of an evolution equation, it is necessary to identify the domain of the elliptic operator exactly in order to
treat the occurring nonlinearities suitably, see \cite{Pruess}.

In mathematical terms, the problem is the following: 
Let  $\Omega\subset \R^3$ be a bounded domain with boundary $\partial \Omega$. We assume that there exists a subset $D\subset\partial \Omega$ such that $\Omega$ is a proper subset of the interior $\widehat \Omega$ of $\Omega\cup D$, i.e., $\widehat \Omega\setminus \Omega\not=\emptyset$.   

Consider an elliptic equation
\begin{eqnarray*}
-\nabla \cdot \mu \nabla u &=&f \text{ in }\Omega\\
u&=&0\text{ on } D\\
\nu\cdot \mu u&=&0\text{ on } \partial \Omega \setminus D,
\end{eqnarray*}
where $\nu$ denotes the normal derivative.

We are interested in the regularity of the solution $u$ near points in $\widehat\Omega\setminus \Omega$. 
Our main theorem, resting on  results from \cite{haller},
says that, under moderate assumptions on the geometry, the solution is again $W^{1,3+\epsilon}$ near these points. Our strategy is to localize the problem around the points under consideration and to reduce the localized problem by a $C^1$-transformation to one for 
which the resulting geometry fits into a class of model constellations
treated in \cite{haller} by a  symmetrization/antisymmetrization procedure.
Interestingly, the symmetrized part of the solution appears exactly as 
a solution of a \emph{mixed} boundary value problem. Therefore one can, on one hand, expect no better regularity
 than $W^{1,4-\epsilon}$ in view of Shamir's famous counterexample \cite{shamir}, but obtains, one the other hand, 
$W^{1,3+\epsilon}$ regularity for right hand sides in $W^{-1,3+\epsilon}$ for (possibly small) $\epsilon >0$.
So, generally speaking, our approach shows that the singularities of the solution at the border of the buried boundary part correspond inevitably to the singularities occurring for a mixed boundary value problem at the border between Dirichlet and Neumann boundary part.

We conjecture that our regularity result also is of use for the investigation of `rigid inclusions' in mechanics as studied in \cite{khlud}.

\section[Preliminaries ]{Preliminaries and general assumptions}
In the sequel, $\Omega \subseteq \mathbb R^3$ will denote a three dimensional domain, while $\Lambda$
stands for an open set in $\R^d$.
\label{s-not}
For $\x \in \R^d$, we denote by $B(\x;r)$ the ball of radius $r$ around $\x$.
Moreover, $W^{1,q}(\Lambda)$  means the (complex) Sobolev space on $\Lambda$. 
Given a closed subset $E $ of $\partial \Lambda$, we let
$W^{1,q}_E(\Lambda)$ be the closure of
\[ 
C^\infty_E(\Lambda):=\left \{v|_\Lambda: v \in C^\infty_0(\R^d),\, \supp v \cap E = \emptyset \right \}
\]
in $W^{1,q}(\Lambda)$. As usual, we write  $W^{1,q}_0(\Lambda)$ instead of $W^{1,q}_{\partial \Lambda}(\Lambda)$. Finally, 	
$W_E^{-1,q}(\Lambda)$ 
denotes the (anti)dual to 
$W^{1,q'}_E(\Lambda)$ with respect to an extension of the $L^2$ sesquilinear form, where $\frac {1}{q}+\frac {1}{q'}=1$.

\begin{definition} \label{d-coeff}
A \emph{coefficient function} on an open subset $\Lambda$ of $\R^d$ is a bounded, measurable function $\rho$ 
on $\Lambda$, taking values in the set of \emph{real, symmetric} $d \times d$ matrices.  
If $\rho$ additionally satisfies the condition
\begin{equation} \label{e-ellip}
\essinf2_{{\x}\in \Lambda} \inf_{\|\xi\|_{\R^d}=1} \rho({\x})\xi\cdot \xi   >0,
\end{equation}
then it will be called elliptic. 
\end{definition}

Given a coefficient function $\rho$ on $\Lambda$, we define 
\[
-\nabla \cdot \rho \nabla:W^{1,2}_E(\Lambda) \to W^{-1,2}_E(\Lambda) 
\]
by
\[\langle -\nabla \cdot \rho \nabla v,w\rangle =\int_\Lambda \rho \nabla v\cdot \nabla 
\overline w \,d\x, \quad v, w \in W^{1,2}_E(\Lambda),
\]
$\langle \cdot , \cdot \rangle$ denoting the sesquilinear pairing between $W^{1,2}_E(\Lambda)$
and $W^{-1,2}_E(\Lambda)$. 
\begin{rem} \label{r-neum} It is well-known that the part of this operator in $L^2(\Lambda)$ leads to a homogeneous Dirichlet condition (in the sense of traces)
on $E$ and a (generalized) homogeneous Neumann condition $\nu \cdot \rho \nabla \psi =0$
(for  $\psi$ in the domain)  
on $N:=\partial \Lambda \setminus E$, see
\cite[Section 1.2]{cia}  or  \cite[Chapter II.2]{ggz}, compare also \cite{daners}.
\end{rem}

In the sequel, we will frequently identify a uniformly continuous function defined on a subset $V$ of $\R^d$ with its (unique) uniformly continuous extension to the closure $\overline V$. 
By $\|\cdot\|_X$ we denote the norm in the Banach space $X$. 
Finally, the letter $c$ denotes a generic constant, not always of the same value.

\section[Notation, Preliminary Results,  Model Constellations]
{Notation,  Preliminary Results, Model Constellations}
\label{s-main3} 

\subsection{Notation} \label{ss-prelim}
We write variables in $\R^3$ in the form $\x=(x_1,x_2,x_3)$, $\y=(y_1,y_2,y_3)$, etc. 
and denote by $\mathrm e_1,\mathrm e_2, \mathrm e_3$ the unit vectors in the $x$, $y$ and $z$ direction, respectively. 
Moreover, we use the following notation: 
\begin{enumerate}
\item $\iota:\R^3 \to \R^3$ denotes the involutive  map given by reflection in the first coordinate $\iota(x_1,x_2,x_3)=(-x_1,x_2,x_3)$.
\item  $\mathfrak H^\pm_j \subset \R^3$, $j=1,2,3$, are the half spaces $\{\y:  \pm y_j > 0\}$.
\item $\mathfrak C$  stands for the cube ${]-1,1}\,[^3$, and $\mathfrak Q$ for its lower half, i.e., 
$\mathfrak Q = \mathfrak C \cap \mathfrak H_3^-$. Moreover, 
$$\mathfrak C_\pm = \mathfrak C \cap \mathfrak H_1^\pm \text{ and } \mathfrak Q_\pm = 
\mathfrak Q \cap \mathfrak H_1^\pm .$$
\item A bijective map $\phi:V\to W$ between two subsets $V,W$ of $ \R^d$ is called {\em bi-Lipschitz},
if there exist positive constants $c_1$ and $c_2$ such that 
\begin{eqnarray}\label{bilip}
c_1|\x-\y|\le |\phi(\x) -\phi(\y) | \le c_2 |\x- \y|, \; \x, \y \in V.
\end{eqnarray}
It is not hard to see that $\phi$ then extends (uniquely) to
a bi-Lipschitz map $\widehat \phi: \overline V\to  \overline W$ between the closures of $V$ and $W$, satisfying \eqref{bilip} with the same constants $c_1$ and $c_2$. 

Recall that a Lipschitz function on a domain possesses in almost all points a classical (and hence a generalized) derivative, which is in norm not larger than the Lipschitz constant, cf. \cite[Sect. 3.1.2]{ev/gar}.

\item 
We call a bijective map $\phi:V\to W$  between open sets $V ,W\subset \R^3$ a $C^1$-diffeomorphism, if 
$\phi$ and $\phi^{-1}$ are continuously differentiable with \emph{bounded} derivatives. 
In this terminology, a $C^1$-diffeomorphism is, in particular,  a {bi-Lipschitz} mapping bet-\\ween $V$ and $W$. 
\item
Following \cite[Ch. 2.1]{wallin}, we call a closed set $E \subset \R^3$ a $2$-set if there are constants $c_\bullet,
c^\bullet$ such that 
\begin{equation} \label{e-twoset}
c_\bullet \, r^2 \le \mathcal H_2 \bigl (E \cap B(\x,r) \bigr ) \le c^\bullet \, r^2, \quad \x \in E, \, r \in {]0,1]},
\end{equation}
$\mathcal H_2$ being the two-dimensional Hausdorff measure on $\R^3$.
\end{enumerate}

\subsection{Localization}
We start by  recalling Gr\"oger's localization principle, \cite[Lemma 2]{groeger89}, for elliptic second order 
operators in the $W^{1,q}$ scale. It shows that local regularity implies global regularity. 

\begin{theorem} \label{t-groeger_local_global} 
Let $\Lambda \subset \R^d$ be a domain and $D \subset \partial \Lambda$ a closed subset of the boundary.
Suppose that $\rho$ is an elliptic  coefficient function.
Let $U_1,\ldots, U_n$ be an open covering of $\overline \Lambda$ and define $\Lambda_j = U_j \cap \Lambda$,
$N_j = U_j \cap (\partial \Lambda \setminus  D) \subset \partial \Lambda_j$, $D_j = \partial \Lambda_j
\setminus N_j$.\\
Let $q \ge 2$. 
Suppose that, for every $f_j \in W^{-1,q}_{D_j}(\Lambda_j)$ the solution $u_j \in W^{1,2}_{D_j}(\Lambda_j)$,
of $-\nabla \cdot \rho|_{\Lambda_j} \nabla u_j = f_j$,
in fact lies in $W^{1,q}_{D_j}(\Lambda_j)$. \\
Then the solution $u$ of $-\nabla \cdot \rho \nabla u= f \in W^{-1,q}_D(\Lambda)$ belongs to $W^{1,q}_D(\Lambda)$.
\end{theorem}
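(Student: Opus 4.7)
The plan is to convert the global regularity question into a finite collection of local ones via a smooth partition of unity. Fix $\{\eta_j\}_{j=1}^n$ subordinate to $\{U_j\}$, with $\eta_j \in C_c^\infty(U_j)$ and $\sum_j \eta_j \equiv 1$ on $\overline{\Lambda}$, and for each $j$ set $v_j := (\eta_j u)|_{\Lambda_j}$. Because $\supp\eta_j$ is compactly contained in $U_j$, the function $v_j$ vanishes near the ``artificial'' part $(\partial U_j)\cap\overline{\Lambda_j}$ of $\partial\Lambda_j$ (which lies in $D_j$ by construction), and it inherits the vanishing trace on $D\cap U_j\subset D_j$ from $u$; hence $v_j \in W^{1,2}_{D_j}(\Lambda_j)$. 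Conversely, extending any $W^{1,q}_{D_j}(\Lambda_j)$ function by zero outside $\Lambda_j$ produces an element of $W^{1,q}_D(\Lambda)$, so once each $v_j\in W^{1,q}_{D_j}(\Lambda_j)$ is established, the identity $u=\sum_j v_j$ yields the global conclusion.

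A direct product rule calculation, tested against $w\in C^\infty_{D_j}(\Lambda_j)$, shows that $v_j$ satisfies the local equation
\[
-\nabla\cdot\rho|_{\Lambda_j}\nabla v_j \;=\; \eta_j f \;-\; \rho\nabla u\cdot\nabla\eta_j \;-\; \nabla\cdot\bigl(u\,\rho\,\nabla\eta_j\bigr)\;=:\;F_j.
\]
The term $\eta_j f$ lies in $W^{-1,q}_{D_j}(\Lambda_j)$ because multiplication by $\eta_j$ maps $W^{-1,q}_D(\Lambda)$ continuously into $W^{-1,q}_{D_j}(\Lambda_j)$; the key observation is that $\eta_j w$, extended by zero, lies in $W^{1,q'}_D(\Lambda)$ for any $w\in W^{1,q'}_{D_j}(\Lambda_j)$, which follows from $\supp\eta_j\subset U_j$ together with $D\cap U_j\subset D_j$. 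The remaining two summands are handled by the Sobolev embedding $W^{1,2}(\Lambda)\hookrightarrow L^{2d/(d-2)}(\Lambda)$, giving $\rho\nabla u\cdot\nabla\eta_j\in L^2(\Lambda_j)$ and $u\,\rho\,\nabla\eta_j\in L^{2d/(d-2)}(\Lambda_j)$; combined with the dual Sobolev embedding $L^r\hookrightarrow W^{-1,q}$ valid for $r\geq dq/(d+q)$, this places both terms in $W^{-1,q}_{D_j}(\Lambda_j)$ as long as $q\leq 2d/(d-2)$, which comfortably covers the range $q\in[2,3+\varepsilon]$ that is relevant to the present paper.

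With $F_j\in W^{-1,q}_{D_j}(\Lambda_j)$ secured, the hypothesis supplies a $W^{1,q}_{D_j}$-solution $\widetilde v_j$ of $-\nabla\cdot\rho|_{\Lambda_j}\nabla\widetilde v_j=F_j$; uniqueness in $W^{1,2}_{D_j}$ (Lax--Milgram, using ellipticity of $\rho$) forces $\widetilde v_j=v_j$, whence $v_j\in W^{1,q}_{D_j}(\Lambda_j)$, and summation over $j$ closes the argument. I expect the most delicate step to be checking that $F_j$ genuinely lives in the subspace $W^{-1,q}_{D_j}(\Lambda_j)$ with the correct Dirichlet-support condition, rather than only in an ambient $W^{-1,q}(\Lambda_j)$; this reduces to the bookkeeping $D\cap U_j\subset D_j$ and $(\partial U_j)\cap\overline{\Lambda_j}\subset D_j$, but needs to be done carefully for each of the three summands above. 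Should one wish to push the argument to exponents $q$ beyond $2d/(d-2)$, a finite bootstrap through intermediate exponents would be required, invoking self-adjointness of the operator to transfer the hypothesis to those intermediate levels.
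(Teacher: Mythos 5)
Your argument is essentially the same localization-by-cutoff approach that the paper uses: the paper cites Gr\"oger \cite{groeger89} for the full proof and, in the paragraph immediately following the theorem, sketches precisely your strategy, namely that $\eta_j u \in W^{1,2}_{D_j}(\Lambda_j)$ satisfies a local equation with a right-hand side in $W^{-1,q}_{D_j}(\Lambda_j)$ (this is the content of \lemref{l-cutoff}~iii), so that local regularity for each piece yields global regularity. Your product-rule decomposition of $F_j$, the verification that the support-of-$\eta_j$ and $D\cap U_j\subset D_j$ bookkeeping places each term in the correct space $W^{-1,q}_{D_j}(\Lambda_j)$, and the uniqueness argument identifying $\widetilde v_j$ with $v_j$ are all correct. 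One small difference: the paper's sketch writes the local equation with an added zero-order term $+\,\eta_j u$ to secure coercivity uniformly (so that the local operator is invertible even when $D_j$ is degenerate), whereas you rely on well-posedness of the local problem as implicitly assumed by the phrase ``the solution'' in the hypothesis; both are acceptable here.

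The one point worth tightening is the range restriction $q\le 2d/(d-2)$ that your Sobolev-embedding step imposes. For the paper ($d=3$, $q<6$) this costs nothing and in fact matches the explicit hypotheses of \lemref{l-cutoff}~iii). For the general statement ($q\ge 2$ arbitrary) one does need the bootstrap you gesture at, but the mechanism is not ``self-adjointness''; it is interpolation. The local hypothesis gives an isomorphism $-\nabla\cdot\rho\nabla: W^{1,q}_{D_j}(\Lambda_j)\to W^{-1,q}_{D_j}(\Lambda_j)$, Lax--Milgram gives it at $q=2$, and complex interpolation of the scales $\{W^{1,s}_{D_j}\}_s$ and $\{W^{-1,s}_{D_j}\}_s$ (exactly the device used in \proref{e-interpolatt}) provides the isomorphism at every intermediate exponent. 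One then iterates your cutoff estimate: $u\in W^{1,2}$ yields $u\in W^{1,q_1}$ with $q_1=\min\{q,2d/(d-2)\}$, then $u\in W^{1,q_1}$ improves the Sobolev index of the commutator terms and yields $u\in W^{1,q_2}$ with a larger $q_2$, and so on, reaching $q$ in finitely many steps. With that replacement for the final sentence, your proof is complete.
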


The point is here the following: For $\eta_j \in C^\infty_0(U_j)$ and $u \in W^{1,2}_D(\Lambda)$ one 
has $\eta_j u \in W^{1,2}_{D_j}(\Lambda_j)$. Moreover, $\eta_j u$ fulfills an analogous equation
$-\nabla \cdot \rho|_{\Lambda_j} \nabla (\eta_j u) + \eta_j u = f_j \in W^{-1,q}_{D_j}(\Lambda_j)$.
So the essential ingredient in the proof of Theorem \ref{t-groeger_local_global} is the knowledge
that the functions $\eta_j u$ actually belong to $W^{1,q}_{D_j}(\Omega)$, see Lemma \ref{l-cutoff} below.

It is the aim of this paper to show this for a wide range of geometric constellations by a reduction to a few model situations and the application of bi-Lipschitz transformations  (which are in most cases even $C^1$). 
In order to distinguish the model situation from the general one, we will denote in the model situation the domain by $\Lambda\subset \R^d$,
 its Dirichlet boundary by $E$ and the coefficient function by $\rho$, while, in the general case, we will write $\Omega\subset \R^3$, $D$ and $\mu$ for the corresponding items.

In this context, we recall the following transformation theorem:  

\begin{proposition} \label{p-transform} 
Let $\Lambda, \Pi  \subseteq \R^d$ be open and bounded with finitely many components and $E$ be a closed subset of $\partial\Lambda $.
Assume that $\phi:  \Lambda \to \Pi$ is bi-Lipschitz and define
$\widehat \phi (E) =: F$, $\widehat \phi$ being the bi-Lipschitz extension of $\phi$ to $\overline \Lambda$.
\renewcommand{\labelenumi}{\roman{enumi})}
\begin{enumerate}
\item \label{p-transform-1} 
$\phi$ induces a {\rm (}consistent in $q \in [1,\infty[)$ set of linear topological isomorphisms
\[
\Psi_q : W_{F}^{1,q}(\Pi) \to W^{1,q}_{E}(\Lambda).
\]
given by  $(\Psi_q f )(\x) = f(\phi(\x)) = (f \circ \phi)(\x)$.\\

\item
Let $\rho$ be a coefficient function on $\Lambda$.
For every  $f,g \in W^{1,2}(\Pi)$  the formula 
\begin{equation} \label{e-transfromul}
\int_\Lambda \rho (\x) \nabla \bigl ( f \circ \phi \bigr )(\x) \cdot 
 \nabla \bigl ( \overline g \circ \phi \bigr )(\x)\; d \x =
\int_{\Pi}  {\omega} (\y) \nabla f (\y) \cdot 
 \nabla \overline g (\y)\; d \y, 
\end{equation}
holds with
\begin{equation}\label{e-transformat1}
	  \omega(\y) = (\mathcal D\phi)(\phi^{-1}(\y)) \rho(\phi^{-1}(\y)) \bigl (\mathcal D\phi \bigr)^T(\phi^{-1}(\y))
		\frac{1}{\bigl|\det (\mathcal D\phi)(\phi^{-1}{\y})\bigr|},
	\end{equation}
	where $\mathcal D\phi$ denotes the Jacobian of $\phi$ and $\det(\mathcal D\phi)$ the
	corresponding determinant.
\item 
 The following formula holds:
	\begin{equation}\label{e-transformat}
	 \bigl ( \Psi_{q'}\bigr )^* \nabla \cdot \rho \nabla \Psi_q = \nabla \cdot \omega \nabla.
	\end{equation}
	Finally, if $-\nabla \cdot \rho \nabla : W^{1,q}_E(\Lambda)
	\to W^{-1,q}_E(\Lambda)$ is a topological isomorphism, then
	$-\nabla \cdot  \omega \nabla :W^{1,q}_{F}(\Pi) \to W^{-1,q}_{F}(\Pi)$ also is (and vice versa).
\end{enumerate}
\end{proposition}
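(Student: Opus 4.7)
The plan is to treat the three parts in order, with part (\ref{p-transform-1}) carrying the bulk of the technical work.

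For (\ref{p-transform-1}), I would first use Rademacher's theorem to obtain that $\phi$ and $\phi^{-1}$ are differentiable almost everywhere, with derivatives bounded by the respective Lipschitz constants. From the bi-Lipschitz inequality one deduces pointwise upper and lower bounds on $|\det \mathcal D\phi|$. The chain rule $\nabla(f\circ\phi)(\x) = (\mathcal D\phi(\x))^T (\nabla f)(\phi(\x))$ holds almost everywhere for $f \in W^{1,q}(\Pi)$; combined with the bi-Lipschitz change of variables formula, this shows that the pullback $f \mapsto f\circ\phi$ maps $W^{1,q}(\Pi)$ boundedly into $W^{1,q}(\Lambda)$, with bounded inverse $g\mapsto g\circ\phi^{-1}$. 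The nontrivial point is compatibility with the boundary subspaces. For $v \in C^\infty_0(\R^d)$ with $\supp v \cap F = \emptyset$, the bi-Lipschitz estimate \eqref{bilip} forces $\widehat\phi^{-1}(\supp v \cap \overline\Pi)$ to have strictly positive distance from $E$. The pullback $v\circ\phi$ is Lipschitz on $\Lambda$ and extends (via a Lipschitz extension to $\R^d$ or cut-off) to a function whose support is bounded away from $E$; mollifying this extension by a standard kernel of sufficiently small radius yields elements of $C^\infty_E(\R^d)$ that converge to $v\circ\phi$ in $W^{1,q}(\Lambda)$, so $\Psi_q$ maps $C^\infty_F(\Pi)|_\Pi$ into $W^{1,q}_E(\Lambda)$, and the density in the definition of $W^{1,q}_E$ upgrades this to $W^{1,q}_F(\Pi) \to W^{1,q}_E(\Lambda)$. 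The same reasoning applied to $\phi^{-1}$ produces the inverse, and consistency in $q$ is immediate from the pointwise definition.

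For (ii), I would first verify the pointwise identity
\begin{equation*}
\rho(\x)\nabla(f\circ\phi)(\x)\cdot\nabla(\overline g\circ\phi)(\x) = \mathcal D\phi(\x)\,\rho(\x)\,(\mathcal D\phi(\x))^T\,\nabla f(\phi(\x))\cdot\nabla \overline g(\phi(\x))
\end{equation*}
from the a.e. chain rule, then apply the Lipschitz change of variables $\y=\phi(\x)$, $d\x = d\y/|\det\mathcal D\phi(\phi^{-1}(\y))|$, which produces exactly the integrand $\omega(\y)\nabla f(\y)\cdot\nabla\overline g(\y)$ with $\omega$ as in \eqref{e-transformat1}. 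Since the ingredients are $L^\infty$ and the quadratic form is continuous on $W^{1,2}$, the identity extends from, say, smooth $f,g$ to all of $W^{1,2}(\Pi)$ by density.

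Part (iii) is then essentially formal. Testing $-\nabla\cdot\rho\nabla\,\Psi_q f$ against $\Psi_{q'} g = g\circ\phi$ for $f\in W^{1,q}_F(\Pi)$, $g\in W^{1,q'}_F(\Pi)$ gives, via the definition of the operator and \eqref{e-transfromul},
\begin{equation*}
\bigdual{(\Psi_{q'})^*(-\nabla\cdot\rho\nabla)\Psi_q f}{g} = \int_\Lambda \rho\,\nabla(f\circ\phi)\cdot\nabla\overline{g\circ\phi}\,d\x = \int_\Pi \omega\,\nabla f\cdot\nabla\overline g\,d\y,
\end{equation*}
which is precisely $\bigdual{-\nabla\cdot\omega\nabla\, f}{g}$. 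The isomorphism statement in (iii) is then immediate: both $\Psi_q$ and its adjoint $(\Psi_{q'})^*$ are topological isomorphisms by (i), so the invertibility of $-\nabla\cdot\rho\nabla$ between the two $W^{1,q}_E$--$W^{-1,q}_E$ spaces transfers unchanged to $-\nabla\cdot\omega\nabla$ between $W^{1,q}_F$ and $W^{-1,q}_F$.

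The main obstacle I anticipate is the density/approximation argument in (\ref{p-transform-1}): one must be careful that the mollification of a Lipschitz pullback can be arranged to sit inside $C^\infty_E$, and here the positive distance from the pullback support to $E$, which is inherited from \eqref{bilip}, is the crucial quantitative input. Everything else is bookkeeping with the chain rule and the change of variables formula for bi-Lipschitz maps.
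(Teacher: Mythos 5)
Your proposal is correct and follows essentially the same route as the paper. For part (i), the paper cites Maz'ya for the unconstrained ($E=\emptyset$) case (which you reconstruct via Rademacher and change of variables) and then handles the boundary subspace exactly as you do: reduce by density to $f\in C^\infty_F(\Pi)$, note that $f\circ\phi$ is Lipschitz with support at positive distance to $E$, extend to $\R^d$ by a Lipschitz extension theorem, mollify, and restrict. For parts (ii) and (iii) the paper defers to \cite{haller} for the change-of-variables computation and then derives the operator identity and isomorphism transfer by duality, precisely as you argue. The only additional quantitative observation you make — that the bi-Lipschitz lower bound propagates positive distance through $\widehat\phi^{-1}$ — is the same fact the paper uses implicitly, so there is no gap.
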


\begin{proof}
i) The assertion for $E = \emptyset$ is proved in \cite[Section 1.1.7]{mazyasob}
 in  case $\Lambda$ is connected. This carries over to open sets when considering the connected components separately. 
This shows that $  \Psi_q$ maps $W^{1,q}_F(\Pi)$ -- as a subspace of  $W^{1,q}(\Pi)$ -- continuously into 
 $W^{1,q}(\Lambda)$. It remains to prove that $\Psi_qf\in W^{1,q}_E(\Lambda)$ if $f \in  W^{1,q}_F(\Pi)$.
It suffices to show this for $f \in C^\infty_F(\Pi)$, because $W^{1,q}_E(\Lambda)$ is a 
\emph{closed} subspace of $W^{1,q}(\Lambda)$. For such $f$, the function $\Psi_qf = f \circ \phi$
 is Lipschitzian, and its support has positive distance to $E$. 
By classical results (cf. \cite[Theorem 3.1]{ev/gar}), the  function $\Psi_qf$  has a Lipschitz continuous extension $g$ 
to $\R^d$. One can easily achieve that the support of $g$ also has positive distance to $E$. Using a mollifier argument, 
$g$ is the limit in $W^{1,q}(\R^d)$ for a sequence $(g_n)$ of functions from $C^\infty_0(\R^d)$ whose supports also have
 positive distance to $E$. Thus, $g_n|_\Lambda \in C^\infty_E(\Lambda)$ and $(g_n|_{\Lambda})$ evidently converges to $g|_\Lambda =\Psi_qf$ in $ W^{1,q}(\Lambda)$.  

ii) The formulas \eqref{e-transfromul} and \eqref{e-transformat1} have been derived 
in \cite[Proposition 16]{haller} using the the rules for the (weak) differentiation
of the compositions  $f \circ \phi$ and $ g \circ \phi$, respectively, cf. \cite[Section 1.1.7]
{mazyasob}.\\
iii) Formula \eqref{e-transformat} is directly implied by \eqref{e-transfromul}. The last
assertion follows from (i) by duality and \eqref{e-transformat}.
\end{proof}

From now on we call $\omega$ the \emph{coefficient function obtained from $\rho$ by means
of the transformation} $\phi$, or, in short, the \emph{transformed coefficient function}.

\subsection{The geometric setting} 
\begin{assu} \label{a-assugeneral}
In the sequel we fix a bounded domain $\Omega \subset \R^3$ and a closed 
part of its boundary, $D$, which has 3-dimensional Lebesgue measure $0$. We let 
$$N= \partial \Omega \setminus D.$$ 
Moreover, we fix an \emph{elliptic} coefficient function $\mu$ on $\Omega$, cf. Def. \ref{d-coeff}.
\end{assu}

\begin{definition} \label{d-typoparall}
$D^\parallel \subset D$ is the set of all points $\x\in D$, for which $\Omega \cup D$ is a
 neighbourhood of $\x$ in $\R^3$;  in other words: $D^\parallel  = \text{\rm int}(\Omega \cup D) \setminus
\Omega$.
\end{definition}

Clearly, $D^\parallel \not=\emptyset$ if and only if $\Omega$ is a proper subset of $\widehat \Omega=\Int(\Omega\cup D)$. 

\begin{lemma} \label{l-open}$D^\parallel$ is open in $D$ and every point $\x \in D^\parallel$ has  positive distance to $N$. In particular, $D^\parallel\cap \overline N=\emptyset$. 
\end{lemma}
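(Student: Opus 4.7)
The plan is to extract everything from the single defining property: for every $\x\in D^\parallel$ there exists $r>0$ with $B(\x,r)\subset \Omega\cup D$. I will show that this one ball simultaneously witnesses (a) that a whole $D$-neighbourhood of $\x$ is contained in $D^\parallel$, and (b) that $B(\x,r)$ is disjoint from $N$.

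For openness of $D^\parallel$ in $D$, I would fix $\y\in B(\x,r)\cap D$ and note that $B(\x,r)$ is an open set in $\R^3$ containing $\y$ and contained in $\Omega\cup D$; therefore $\y\in\Int(\Omega\cup D)$. Since $\y\in D\subset\partial\Omega$ and $\Omega$ is open, one has $\y\notin\Omega$, so $\y\in\Int(\Omega\cup D)\setminus\Omega=D^\parallel$. Hence $B(\x,r)\cap D\subset D^\parallel$, which by definition is openness in the subspace topology of $D$.

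For the positive distance to $N$, I would argue by contradiction assuming some $\y\in B(\x,r)\cap N$. By the definition $N=\partial\Omega\setminus D$, the point $\y$ lies in $\partial\Omega$ (so $\y\notin\Omega$, using once more that $\Omega$ is open) and also $\y\notin D$. But $B(\x,r)\subset\Omega\cup D$ forces $\y\in\Omega$ or $\y\in D$, a contradiction. Therefore $B(\x,r)\cap N=\emptyset$, which gives $\dist(\x,N)\ge r>0$. The final clause $D^\parallel\cap\overline N=\emptyset$ is then immediate, since $\dist(\x,N)>0$ means $\x\notin\overline N$.

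I do not anticipate a genuine obstacle here; the only point that requires care is to remember that $\Omega$ is open (so $\partial\Omega\cap\Omega=\emptyset$) when pushing $\y\in D$ or $\y\in N$ out of $\Omega$. The rest is elementary set-theoretic bookkeeping around the definitions of interior and of $N$.
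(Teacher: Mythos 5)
Your proof is correct and follows essentially the same route as the paper's: you take the ball $B(\x,r)\subset\Omega\cup D$ from the definition of $D^\parallel$, observe it witnesses both $B(\x,r)\cap D\subset D^\parallel$ and $B(\x,r)\cap N=\emptyset$ (since $N$ is disjoint from $\Omega\cup D$). The paper's version is merely more terse; you have filled in the same bookkeeping explicitly.
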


\begin{proof}
For $\x  \in D^\parallel$, there exists a  ball $B(\x,r)$ which is contained in $\Omega \cup D$.
This implies that $B(\x,r)\cap D\subset D^\parallel$; moreover it shows that $\dist(\x,N)\ge r$, since $N$ is
 disjoint to $\Omega \cup D$.
\end{proof}

It turns out that it makes sense to divide $D^\parallel$ into the following two subclasses:

\begin{definition} \label{d-rigidincl0}
$\x \in D^\parallel $ belongs to $ D^\parallel _c$, if $B(\x,r) \cap \Omega$ is connected for $r>0$ sufficiently small. 
We let $D_d^\parallel := D^\parallel \setminus D^\parallel _c$.
\end{definition}

Before formulating the assumptions on the points from $D^\parallel $, it is our intention to point out already here a significant
 topological implication for $D^\parallel_d$. 

\begin{lemma} \label{l-disjointopen}
Let $\Lambda_1,\ldots, \Lambda_m$ be mutually disjoint open sets in $\R^d$ and $\Lambda$ their union. Then
\begin{equation} \label{e-boundaryident}
\partial \Lambda_j \subseteq \partial \Lambda 
\quad j \in \{1,\ldots,m \}.
\end{equation}
\end{lemma}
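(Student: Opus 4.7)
The plan is to unpack the definition of boundary and use openness plus disjointness. Recall that for any set $S \subseteq \R^d$ we have $\partial S = \overline{S} \setminus \text{int}(S)$, and for an open set this simplifies to $\overline{S} \setminus S$. Since each $\Lambda_j$ is open and $\Lambda$ is a union of open sets (hence open), both boundaries take this simpler form.

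I would fix $j$ and pick an arbitrary $\x \in \partial \Lambda_j$. To show $\x \in \partial \Lambda$, two things need checking: that $\x \in \overline{\Lambda}$, and that $\x \notin \Lambda$. The first is immediate: $\Lambda_j \subseteq \Lambda$ gives $\overline{\Lambda_j} \subseteq \overline{\Lambda}$, and $\x \in \partial \Lambda_j \subseteq \overline{\Lambda_j}$.

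The second requires verifying $\x \notin \Lambda_i$ for every $i \in \{1,\ldots,m\}$. For $i=j$, openness of $\Lambda_j$ gives $\partial \Lambda_j \cap \Lambda_j = \emptyset$. For $i \neq j$, I would argue by contradiction: if $\x \in \Lambda_i$, openness yields some ball $B(\x,r) \subseteq \Lambda_i$; but $\x \in \overline{\Lambda_j}$, so this ball meets $\Lambda_j$, contradicting $\Lambda_i \cap \Lambda_j = \emptyset$. Combining, $\x \in \overline{\Lambda} \setminus \Lambda = \partial \Lambda$.

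There is really no main obstacle here — the statement is a standard point-set fact and the only ingredient beyond the definitions is that disjoint open sets cannot cluster inside one another. The argument does use, however, that the family is \emph{finite} only implicitly (through the statement $\Lambda$ is a union of opens hence open, which would still hold for arbitrary collections); the core disjointness argument works verbatim for any index set.
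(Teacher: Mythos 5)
Your proof is correct, and it takes a slightly different route from the paper's. The paper argues by contradiction: assume some $\z \in \partial\Lambda_j$ is an interior point of $\Lambda$, pick a ball $B\subseteq\Lambda$ around $\z$, observe that the sets $\Lambda_l\cap B$ form an open disjoint cover of the \emph{connected} set $B$, conclude $B\subseteq\Lambda_k$ for a single $k$, and derive a contradiction. You instead verify directly that any $\x\in\partial\Lambda_j$ lies in $\overline\Lambda\setminus\Lambda$: the inclusion $\x\in\overline\Lambda$ is immediate from $\Lambda_j\subseteq\Lambda$, and $\x\notin\Lambda_i$ is checked case by case, with disjointness ruling out $i\neq j$ because a ball inside $\Lambda_i$ around $\x$ would have to meet $\Lambda_j$. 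Your version avoids appealing to connectedness of the ball altogether and is in that sense more elementary; the paper's version is more compact but relies on that extra topological fact. Both arguments, as you note, extend verbatim to arbitrary (not necessarily finite) disjoint families, so the hypothesis of finiteness plays no role in either.
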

\begin{proof}
Suppose this is false. Then there is a point $\z \in \partial \Lambda_j$ which is an \emph{inner} point of $ \Lambda$. Hence, there  exists an open ball $B \subset \R^d$ around $\z$, which entirely lies
in $\Lambda= \cup _l \Lambda_l$. 
Since the sets $\Lambda_l\cap B$ are open and disjoint and cover the connected set  $B$, there exists a $k$ such that $B\subseteq \Lambda_k$ and $B\cap\Lambda_l=\emptyset$ for $l\not= k$.   
But then $z$ is not in $\partial \Lambda_j$ for any $j$, contradicting the assumption.  
\end{proof}
\begin{lemma} \label{l-01}
Suppose $U \subset \Omega \cup D$ for an open set $U \subset \R^d$ and that
 $\Omega \cap U$ splits up into the components $\Omega_1, \ldots , \Omega_m$. 
Then 
\begin{equation} \label{e-connex1}
\partial {\Omega_j}  \subset D \cup \partial U, \quad j \in \{1,\ldots,m \}
\end{equation}
and
\begin{equation} \label{e-connex2}
\partial {\Omega_j} \cap U \subset D,\quad j \in \{1,\ldots,m \}.
\end{equation}
\end{lemma}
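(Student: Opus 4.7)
The plan is to combine Lemma \ref{l-disjointopen} with the hypothesis $U \subset \Omega \cup D$, then argue pointwise on the boundary of $\Omega_j$ by distinguishing whether a boundary point sits inside $U$ or not. Essentially, the two claims reduce to the same case analysis, with the inclusion \eqref{e-connex2} being the ``interesting half'' of \eqref{e-connex1}.

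First I would apply Lemma \ref{l-disjointopen} to the decomposition of $\Omega \cap U$ into its components $\Omega_1,\dots,\Omega_m$. These components are mutually disjoint open sets whose union is $\Omega \cap U$, so the lemma gives
\[
\partial \Omega_j \subseteq \partial(\Omega \cap U), \qquad j \in \{1,\dots,m\}.
\]
Thus it suffices to show that $\partial(\Omega \cap U) \subseteq D \cup \partial U$ and that $\partial(\Omega \cap U) \cap U \subseteq D$, after which both assertions follow.

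Next, take any $\z \in \partial(\Omega \cap U)$. Since $\Omega \cap U$ is open, $\z \notin \Omega \cap U$, while $\z \in \overline{\Omega \cap U} \subseteq \overline{\Omega} \cap \overline{U}$. Split into two cases. If $\z \notin U$, then $\z \in \overline{U} \setminus U = \partial U$, and we are done. If instead $\z \in U$, then $\z \notin \Omega$ (otherwise $\z$ would be an interior point of $\Omega \cap U$), so $\z \in \overline{\Omega} \setminus \Omega$; but the hypothesis $U \subseteq \Omega \cup D$ together with $\z \in U$ and $\z \notin \Omega$ forces $\z \in D$. This proves \eqref{e-connex1}, and the argument in the second case is exactly \eqref{e-connex2}.

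I do not anticipate a real obstacle: once Lemma \ref{l-disjointopen} is invoked to reduce to $\partial(\Omega\cap U)$, the argument is a short two-case dichotomy driven by openness of $U$ and the containment $U\subseteq \Omega\cup D$. The only point requiring a little care is verifying that a boundary point of $\Omega\cap U$ lying in $U$ really cannot lie in $\Omega$, which is immediate from $U$ being open and $\Omega\cap U$ being the intersection of two open sets.
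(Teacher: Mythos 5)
Your proof is correct and follows essentially the same route as the paper: both reduce via Lemma~\ref{l-disjointopen} to showing $\partial(\Omega\cap U)\subseteq D\cup\partial U$, and both then exploit $U\subseteq\Omega\cup D$ together with openness of $\Omega$ and $U$. The only cosmetic difference is that the paper cites the general inclusion $\partial(\Omega\cap U)\subseteq(\partial\Omega\cap U)\cup(\partial\Omega\cap\partial U)\cup(\Omega\cap\partial U)$ from Dieudonn\'e, whereas you derive the same conclusion by a direct two-case pointwise argument, which is self-contained but not genuinely different in substance.
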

\begin{proof}
According to \eqref{e-boundaryident}, one may write (see \cite[Section 3.8]{dieu})
\[
\partial \Omega_j  \subseteq \partial (\Omega \cap U) 
\subseteq  (\partial \Omega \cap U ) \cup  (\partial \Omega \cap \partial  U ) \cup  (\Omega \cap \partial  U )
\subseteq 
\]
\[
\subseteq 
\bigl  (\partial \Omega \cap ( \Omega \cup D)  \bigr ) \cup \partial U = D \cup \partial U,
\]
because $\Omega$ is open, i.e. $\Omega \cap \partial \Omega = \emptyset$.\\
\eqref{e-connex2} is obtained from \eqref{e-connex1} by intersecting with $U$ and taking into account that 
$U$ is open, i.e. $\partial U \cap U = \emptyset$.\\
\end{proof}

\begin{corollary} \label{c-boundconn}
Adopt the assumptions of Lemma \ref{l-01}. 
For any two functions $f \in W^{1,q}_D(\Omega), \eta \in C^\infty_0(U)$, the function $\eta f|_{\Omega_j}$ belongs 
to $W^{1,q}_0(\Omega_j)$.
\end{corollary}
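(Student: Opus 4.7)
The plan is to prove the corollary by approximation: pick a defining sequence for $f$ in $W^{1,q}_D(\Omega)$, multiply each member by $\eta$, restrict to $\Omega_j$, and verify both that these restrictions lie in the approximating set $C^\infty_{\partial \Omega_j}(\Omega_j)$ for $W^{1,q}_0(\Omega_j)$ and that the sequence converges to $\eta f|_{\Omega_j}$ in $W^{1,q}(\Omega_j)$. The key geometric ingredient, already supplied by Lemma \ref{l-01}, is the inclusion $\partial \Omega_j \subseteq D \cup \partial U$, which lets us separate $\partial \Omega_j$ into two parts that are dealt with by the two cut-off effects (vanishing at $D$ for $f$, compact support in $U$ for $\eta$).

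More concretely, I would first choose a sequence $(f_n) \subset C^\infty_D(\Omega)$, i.e., restrictions of functions $\tilde f_n \in C^\infty_0(\R^3)$ whose supports avoid $D$, with $f_n \to f$ in $W^{1,q}(\Omega)$. Then $\eta \tilde f_n \in C^\infty_0(\R^3)$, and I claim
\[
\supp(\eta \tilde f_n) \cap \partial \Omega_j = \emptyset.
\]
Indeed, because $\eta \in C^\infty_0(U)$, the compact set $\supp \eta$ lies in the open set $U$ and therefore is disjoint from $\partial U$; because $\supp \tilde f_n$ is a compact set disjoint from the closed (and, since $\partial \Omega$ is bounded, compact) set $D$, the product support is disjoint from $D$. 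Combined with $\partial \Omega_j \subseteq D \cup \partial U$ from Lemma \ref{l-01}, this yields the claim, so $\eta \tilde f_n \in C^\infty_{\partial \Omega_j}(\R^3)$ in the sense of the paper's definition and hence $(\eta \tilde f_n)|_{\Omega_j} = (\eta f_n)|_{\Omega_j}$ belongs to the approximating class whose $W^{1,q}$-closure equals $W^{1,q}_0(\Omega_j)$.

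Finally, I would pass to the limit. Since $\eta \in C^\infty_0(U) \subseteq C^\infty_0(\R^3)$ is bounded with bounded gradient, the multiplication operator $g \mapsto \eta g$ is continuous on $W^{1,q}(\Omega)$ (by the product rule), so $\eta f_n \to \eta f$ in $W^{1,q}(\Omega)$. Restriction to the open subset $\Omega_j \subseteq \Omega$ is a bounded operator $W^{1,q}(\Omega) \to W^{1,q}(\Omega_j)$, so $(\eta f_n)|_{\Omega_j} \to (\eta f)|_{\Omega_j}$ in $W^{1,q}(\Omega_j)$. As a limit of functions from $C^\infty_{\partial \Omega_j}(\Omega_j)$, the restriction $\eta f|_{\Omega_j}$ therefore lies in $W^{1,q}_0(\Omega_j)$.

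I do not foresee a serious obstacle: all the work was done in Lemma \ref{l-01}, which delivers $\partial \Omega_j \subseteq D \cup \partial U$. The only point that requires a little care is ensuring \emph{positive} separation (not merely disjointness) of $\supp \tilde f_n$ from $D$, but this is automatic since both are compact (the compactness of $D$ following from the boundedness of $\Omega$); this lets the definition of $C^\infty_{\partial \Omega_j}$ be applied without further smoothing.
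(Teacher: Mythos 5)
Your argument is correct and follows essentially the same route as the paper's proof: reduce by density to $f \in C^\infty_D(\Omega)$, then use the inclusion $\partial\Omega_j \subseteq D \cup \partial U$ from Lemma \ref{l-01} together with the facts that $\supp\eta$ avoids $\partial U$ and $\supp \tilde f_n$ avoids $D$ to conclude that $\supp(\eta\tilde f_n)$ avoids $\partial\Omega_j$. You merely spell out the density passage (continuity of multiplication by $\eta$ and of restriction to $\Omega_j$) that the paper compresses into the phrase ``by density.''
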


\begin{proof}
It is clear that it suffices to show this, by density, only for functions $f \in C^\infty_D(\Omega)$.
\eqref{e-connex1} shows that the support of $\eta f$ stays away from $\partial \Omega_j$
because the support of $\eta$ is by assumption away from $\partial U$, and that of $f$ is away from $D$.
\end{proof}
Later on we will discuss the regularity for the solution of the elliptic equation just by considering functions
$\eta f|_{\Omega_j}$ with $f \in W^{1,2}_D(\Omega), \eta \in C_0^\infty(U)$. 
The above considerations make clear that \emph{a priori} these
functions do belong to $W^{1,2}_0(\Omega_j)$, i.e. fulfill a pure Dirichlet condition.
Exactly this motivates the subsequent assumption on the points in $D^\parallel_d$.

\begin{assu} \label{a-00}
\renewcommand{\labelenumi}{\roman{enumi})}
\begin{enumerate}
\item 
For $\x \in D^\parallel_d$ there is an open connected neighborhood $U_{\x }  \subset \Omega \cup D$ of $ \x$,
 such that  $\Omega \cap U_{\x}$ splits up into finitely many connected components $\Omega_1, \ldots, 
\Omega_{m}$, each of  them being a Lipschitz domain with the property that $\x \in \cap_{j=1}^{m} \overline {\Omega}_j$.
\item
For $\mu$ as in Assumption \ref{a-assugeneral}, let $\mu_j = \mu|_{\Omega_j}$. Then 
\begin{equation} \label{e-localisodiri}
- \nabla \cdot \mu_j \nabla : W^{1,q}_0 (\Omega_j)  \to W^{-1,q} (\Omega_j), \quad j =1,\ldots , m, 
\end{equation}
is a topological isomorphism for some $q > 3$.
\end{enumerate}
\end{assu}

Obviously, the isomorphism property \eqref{e-localisodiri} is a highly implicit condition. We continue by considering several  examples where it is known to hold. In fact, these examples will serve as the model problems later on.

\begin{proposition} \label{p-graphdomain}
Let $\Lambda \subset \R^3$ be a bounded Lipschitz graph domain {\rm (}cf. \cite[Definition 1.2.1.1]{grisvard85}; 
equivalently: $\Lambda$ is a strong Lipschitz domain in the sense of  \cite[Section 1.1.9]{mazyasob}; equivalently:
$\Lambda$ possesses the uniform cone property, cf. \cite[Theorem 1.2.2.2]{grisvard85}{\rm)}.
Suppose the coefficient function $\rho$ is elliptic, uniformly continuous and attains only values in \emph{real, symmetric}
matrices. Then there is a $p>3$ such that 
\begin{equation} \label{e-grraph}
-\nabla \cdot \rho \nabla :W^{1,q}_0(\Lambda)  \to W^{-1,q}(\Lambda)
\end{equation}
is a topological isomorphism for all $q \in [2,p[$, cf. \cite[Theorem 3.12]{ers}, compare also \cite[Theorem 0.5]{jer/ke}
 for the case of the Dirichlet problem for the Laplacian.
\end{proposition}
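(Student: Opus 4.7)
The approach is a standard ``Lax--Milgram $\to$ Jerison--Kenig $\to$ freezing and localization'' reduction. At $q=2$, ellipticity of $\rho$ and Poincaré's inequality on $W^{1,2}_0(\Lambda)$ give coercivity, and the Lax--Milgram lemma produces the isomorphism directly. The real task is to push above the threshold $q=3$.

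First I would treat the constant-coefficient case. For the Laplacian on a bounded Lipschitz graph domain, the Jerison--Kenig theorem cited after the statement gives the $W^{1,q}_0$--$W^{-1,q}$ isomorphism for all $q$ in some range $[2,p_0[$ with $p_0 > 3$. To extend to an arbitrary constant, symmetric, positive definite matrix $\rho_0$, I would apply the linear change of variables $\x \mapsto \rho_0^{-1/2}\x$ and invoke the transformation formula \eqref{e-transformat} of Proposition \ref{p-transform}: this conjugates $-\nabla\cdot\rho_0\nabla$ to $-\Delta$ on another Lipschitz graph domain, reducing the problem to the Jerison--Kenig input.

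For general uniformly continuous $\rho$, I would freeze. Around any $\x_0 \in \overline{\Lambda}$ choose a ball $U_{\x_0}$ so small that, by uniform continuity, $\rho$ differs from the constant $\rho(\x_0)$ by an arbitrarily small $L^\infty$ norm on $U_{\x_0}\cap\Lambda$. The intersection is again a bounded Lipschitz (graph) domain, so the preceding step yields the isomorphism for $-\nabla\cdot\rho(\x_0)\nabla$ on $U_{\x_0}\cap\Lambda$ for all $q \in [2,p_0[$. Since the set of topological isomorphisms $W^{1,q}_0\to W^{-1,q}$ is open in the operator norm, the local operator $-\nabla\cdot\rho\nabla$ on $U_{\x_0}\cap\Lambda$ is then also an isomorphism for some $q$ slightly less than $p_0$.

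Finally, Gröger's localization principle (Theorem \ref{t-groeger_local_global}), applied with empty Neumann part to a finite subcover of $\overline\Lambda$ by such balls, globalizes these local isomorphisms into a global one for some $p > 3$; Sneiberg's stability theorem for the complex interpolation scale then fills in the full interval $[2, p[$. The main obstacle is the Jerison--Kenig input: the optimal threshold $p_0 > 3$ for the Lipschitz Laplacian rests on delicate layer-potential and Rellich-identity techniques and is essentially best possible in view of Shamir's counterexample. The remaining transformation and perturbation steps are routine.
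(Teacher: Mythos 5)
The paper supplies no proof of Proposition~\ref{p-graphdomain}; it only cites \cite[Theorem 3.12]{ers}, with \cite[Theorem 0.5]{jer/ke} as the constant-coefficient input, so there is no in-paper argument to compare yours against. That said, your sketch is correct and follows the classical strategy behind such results: Lax--Milgram at $q=2$; Jerison--Kenig for $-\Delta$ on bounded Lipschitz domains as the deep input; the linear substitution $\x \mapsto \rho_0^{-1/2}\x$ together with Proposition~\ref{p-transform} to reduce an arbitrary constant symmetric positive-definite matrix to a scalar multiple of the Laplacian on another bounded Lipschitz domain; a freezing and small $L^\infty$-perturbation argument at a \emph{fixed} $q \in {]3, p_0[}$, exploiting uniform continuity of $\rho$; Gröger's Theorem~\ref{t-groeger_local_global} with $N_j = \emptyset$ over a finite subcover to globalize; and the interpolation/Sneiberg stability used in Proposition~\ref{e-interpolatt} to fill out the interval $[2,p[$. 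One technical point worth flagging: the intersection of an arbitrary small ball with a Lipschitz graph domain need not itself be a Lipschitz domain with uniformly controlled Lipschitz constant, so the local patches $U_{\x_0}$ are usually taken to be small cubes aligned with the local graph charts; this choice also keeps the Jerison--Kenig threshold $p_0$ and the perturbation threshold at the chosen $q$ uniform over the finite cover. That adjustment is routine and does not affect the soundness of your argument.
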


\begin{proposition} \label{p-convexhetero}
Let $\Lambda \subset \R^3$ be a bounded Lipschitz domain whose  closure  is a polyhedron.
Let $\Pi$ be any plane in $\R^3$ which intersects $\Lambda$, and assume that the angles between $\Pi$ and all (parts of) 
adjacent boundary planes are not larger than $\pi$. Suppose, moreover,  that the (elliptic) coefficient function $\rho$ 
is constant on  each of the components of $\Lambda \setminus  \Pi$.\\
Then there is a  $p >3$, such that \eqref{e-grraph} is a topological isomorphism for all $q \in {[2,p[}$, cf. \cite[Theorem 2.1]{e/k/r/s}.
\end{proposition}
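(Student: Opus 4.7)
The plan is to combine Gröger's localization principle (\thmref{t-groeger_local_global}) with the transformation result (\proref{p-transform}) to reduce the claim to a finite list of local model problems, each of which is handled either by classical theory, by \proref{p-graphdomain}, or by an explicit analysis of singular exponents enabled by the angle condition. First I would fix a finite open cover $\{U_j\}$ of $\overline{\Lambda}$ such that every slice $\Lambda_j := U_j \cap \Lambda$ falls into exactly one of four geometric types:
\begin{enumerate}
\item[(a)] $U_j \subset \Lambda$ and $U_j \cap \Pi = \emptyset$;
\item[(b)] $U_j$ meets $\partial\Lambda$ but avoids $\Pi$;
\item[(c)] $U_j$ meets $\Pi$ but avoids $\partial\Lambda$;
\item[(d)] $U_j$ contains a point of $\Pi \cap \partial\Lambda$.
\end{enumerate}
On every piece the coefficient $\rho|_{\Lambda_j}$ is constant on each of at most two subregions.

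Type (a) is classical interior regularity for constant-coefficient second order elliptic operators. Type (b) is a constant-coefficient Dirichlet problem on a Lipschitz graph polyhedron, which is exactly the setting of \proref{p-graphdomain}. For type (c), after an affine straightening of $\Pi$ to $\{x_1 = 0\}$ via \proref{p-transform}, the problem becomes a two-sided transmission problem with constant elliptic tensors $\rho_\pm$ on the two half-spaces; classical elliptic regularity on each side, combined with the continuity of the trace and of the conormal flux across the interface, yields local $W^{1,q}$ regularity for every $q < \infty$.

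Type (d) is the principal obstacle. Near a point $\x \in \Pi \cap \partial\Lambda$ the geometry is modelled by a polyhedral cone bisected by an interior transmission half-plane. Kondratiev--Dauge Mellin analysis expresses the singular part of any solution in spherical coordinates around $\x$ as a finite sum of terms $r^{\lambda}\Phi(\omega)$, with the exponents $\lambda$ determined by an eigenvalue problem on the cross-sectional arc, equipped with Dirichlet endpoint conditions and a jump at the transmission point. Local $W^{1,q}$ regularity for $q > 3$ is equivalent to all relevant exponents lying strictly above a threshold determined by $q$ and the codimension of the singularity. The hypothesis that the angle between $\Pi$ and each adjacent boundary plane is at most $\pi$ ensures that the cross-sectional sub-wedges on both sides of $\Pi$ are non-reentrant; a direct computation of the transcendental eigenvalue equation then shows that the first positive exponent exceeds the $W^{1,q}$-threshold for some $q > 3$. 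An analogous spherical-domain eigenvalue estimate handles any isolated vertex singularities, again using the same angle bound.

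Combining the uniform $p > 3$ obtained across cases (a)--(d) and applying \thmref{t-groeger_local_global} yields the global $W^{1,q}$ isomorphism for $q \in [2, p[$. The main difficulty is the analysis in (d): without the angle bound the first singular exponent could fall below the threshold, and one would recover $W^{1,q}$ only with $q$ arbitrarily close to $2$, which would be insufficient for the applications in the remainder of the paper.
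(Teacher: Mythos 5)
The paper does not actually prove this proposition; it invokes it via the citation to \cite[Theorem~2.1]{e/k/r/s}, and your sketch correctly reconstructs the broad strategy of that reference: localize via \thmref{t-groeger_local_global}, transform via \proref{p-transform}, and reduce to a finite list of model constellations. However, the entire analytic difficulty lives in your case (d), and there you assert, rather than establish, that the angle hypothesis forces the relevant singular exponent above the $W^{1,q}$-threshold for some $q>3$. The thresholds should be made explicit: for a codimension-two edge singularity $r^\lambda$ one needs $\lambda>1-2/q$, for a vertex $\lambda>1-3/q$. Verifying that all Mellin exponents of the associated operator pencil on the cross-sectional arc --- with Dirichlet endpoint conditions and a transmission jump where $\Pi$ meets the arc --- clear $1/3$ strictly, simultaneously for every edge where $\Pi$ touches $\partial\Lambda$ and for every vertex, and as a function of both the dihedral angles \emph{and} the jump between the two constant tensors $\rho_\pm$, is precisely what \cite{e/k/r/s} proves. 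As written, your step (d) reduces the proposition to an unproved claim essentially equivalent to it, so the proposal identifies the right machinery but does not close the argument. A smaller point on case (c): the assertion that a flat transmission interface with piecewise-constant coefficients gives local $W^{1,q}$ regularity for every $q<\infty$ is correct, but deserves justification --- for instance, flatten each side to the Laplacian by a piecewise linear change of variables preserving the interface (compare \lemref{l-rotat}); the solution is then piecewise smooth and Lipschitz across the interface, hence $W^{1,q}_{\mathrm{loc}}$ for all finite $q$.
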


\begin{corollary} \label{e-corspiegel}
Let $\blacksquare \subset \R^2$ be any (open) rectangle and $\Lambda$ be the cuboid $\blacksquare \times {]-1,0[}$.
 Put $M:= \blacksquare \times \{0\}$ and $E:=\partial \Lambda  \setminus M$. Let the elliptic coefficient function 
$\rho$ be constant on $\Lambda$.
Then there is a $p>3$, such that 
\begin{equation} \label{e-Neumanntop}
-\nabla \cdot \rho \nabla: W^{1,q}_E(\Lambda) \to W^{-1,q}_E(\Lambda)
\end{equation}
is a topological isomorphism for $q \in [2,p[$.
\end{corollary}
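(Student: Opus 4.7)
The strategy is a reflection/symmetrization argument that reduces the mixed boundary value problem on the lower half cuboid $\Lambda$ to a pure Dirichlet problem on the full cuboid $\tilde\Lambda := \blacksquare \times {]-1,1[}$, to which Proposition~\ref{p-convexhetero} applies. Let $S(x_1,x_2,x_3) = (x_1,x_2,-x_3)$ and $J = \operatorname{diag}(1,1,-1)$. Define the reflected coefficient $\tilde\rho$ on $\tilde\Lambda$ by $\tilde\rho = \rho$ on $\Lambda$ and $\tilde\rho = J\rho J$ on $S(\Lambda) = \blacksquare \times {]0,1[}$; this is the coefficient function obtained from $\rho$ via Proposition~\ref{p-transform} applied to $S$ on the upper half, glued to the original $\rho$ on the lower half. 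Since $\rho$ is constant, $\tilde\rho$ is piecewise constant on the two components $\Lambda, S(\Lambda)$ of $\tilde\Lambda \setminus \Pi$, where $\Pi = \blacksquare \times \{0\}$.

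Next I verify that Proposition~\ref{p-convexhetero} applies: $\tilde\Lambda$ is a Lipschitz polyhedron, the plane $\Pi$ meets $\tilde\Lambda$, and the angles between $\Pi$ and the four adjacent side faces all equal $\pi/2 \le \pi$. Thus, there exists $p>3$ such that
\[
-\nabla\cdot\tilde\rho\nabla : W^{1,q}_0(\tilde\Lambda) \to W^{-1,q}(\tilde\Lambda)
\]
is a topological isomorphism for every $q\in [2,p[$.

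Now I transfer this back to $\Lambda$. Define the bounded linear operator $T : W^{1,q'}_0(\tilde\Lambda) \to W^{1,q'}_E(\Lambda)$ by $Tw := (w + w\circ S)|_\Lambda$; both $w$ and $w\circ S$ vanish on $\partial\tilde\Lambda$, so after restriction they vanish on $E = \partial\blacksquare \times [-1,0] \cup \blacksquare\times\{-1\}$. Dualizing, every $f \in W^{-1,q}_E(\Lambda)$ yields $\tilde f := T^* f \in W^{-1,q}_0(\tilde\Lambda)$, and I let $\tilde u \in W^{1,q}_0(\tilde\Lambda)$ be the unique solution to $-\nabla\cdot\tilde\rho\nabla\tilde u = \tilde f$. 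By construction $\tilde f$ is $S$-invariant, and using the identity $J\tilde\rho(x)J = \tilde\rho(Sx)$ together with the transformation formula \eqref{e-transfromul} one checks that $\tilde u \circ S$ solves the same equation; uniqueness then forces $\tilde u \circ S = \tilde u$. Setting $u := \tilde u|_\Lambda$ and splitting $\int_{\tilde\Lambda}\tilde\rho\nabla\tilde u\cdot\nabla\bar\phi\,d\x$ into its two halves (applying Proposition~\ref{p-transform} on $S(\Lambda)$) shows that $u$ solves $-\nabla\cdot\rho\nabla u = f$ in $W^{-1,q}_E(\Lambda)$. Injectivity follows symmetrically: any homogeneous solution $u\in W^{1,q}_E(\Lambda)$ extends, via the standard even-reflection result, to $\tilde u \in W^{1,q}_0(\tilde\Lambda)$ solving the homogeneous equation for $\tilde\rho$, hence vanishes. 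Continuity of the inverse is a composition of bounded linear maps, so the mapping in \eqref{e-Neumanntop} is a topological isomorphism.

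The main technical obstacle is bookkeeping with the coefficient reflection: one must consistently combine the change-of-variables formula of Proposition~\ref{p-transform} with the identity $J\rho J\cdot J = J\rho$ to ensure the symmetrically extended $\tilde u$ actually satisfies the weak equation with $\tilde\rho$ across $\Pi$, and conversely that the symmetry of $\tilde u$ descends to a function with the correct generalized Neumann condition on $M$. Everything else — the polyhedral geometry, the angle condition, and the mapping properties of $T$ — is direct.
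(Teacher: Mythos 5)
Your proof is correct and takes essentially the same route as the paper: reflect across the plane $z=0$ to obtain a pure Dirichlet problem on the doubled cuboid $\blacksquare\times{]-1,1[}$ with a piecewise-constant transmission coefficient, then apply Proposition~\ref{p-convexhetero}. The paper delegates the reflection machinery to \cite[Proposition 4.13]{mitRobert}, whereas you write out the symmetrization operator $T$ and the verification via Proposition~\ref{p-transform} explicitly; the substance is identical.
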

\begin{proof}
Reflection across the $(x,y)$-plane (compare \cite[Proposition 4.13]{mitRobert}) allows to transform the
problem \eqref{e-Neumanntop} to a pure Dirichlet problem which fits into Proposition \ref{p-convexhetero}.
\end{proof}

\begin{assu} \label{d-rigidincl}
For every point $\x \in D_c^\parallel$ there is an open neighborhood $ U_{\x} \subset \Omega \cup D$
for which the following conditions are satisfied:
\renewcommand{\labelenumi}{\roman{enumi})}
\begin{enumerate}
\item
There is a $C^1$-diffeomorphism $\phi_\x$ from $ U_{\x}$ onto the cube $\mathfrak C$ (see Subsection 3.1),
 satisfying
\begin{equation} \label{e-2} 
\phi_\x(\Omega \cap U_{\x})= \mathfrak  C \setminus \Sigma, \quad
\phi_\x( U_{\x} \cap D^\parallel)= 
\Sigma, \quad \phi_\x(\x)=0,
\end{equation} 
where $\Sigma$ is one of the sets $\Sigma_1, \Sigma_2, \Sigma_3$, below:

\begin{equation} \label{e-halbflaeche}
\Sigma_1:=\{\y:y_1=0, -1 < y_2 \le 0, -1 < y_3 < 1\}
\end{equation}

\begin{eqnarray}
\label{e-halbflaeche00}
&&\Sigma_2 \; \text{is the \emph{closed} triangle with vertices }\\
&& (0,0,0),  (0,-1,-1),  (0,-1,1)\nonumber\\
&& \text{minus the (open) leg
between}\, (0,-1,-1) \text{and} \,(0,-1,1) .\nonumber
\end{eqnarray}

\begin{equation} \label{e-halbflaeche0}
\Sigma_3:=\bigl (\mathfrak C \cap \{\mathrm z: z_1=0 \}\bigr ) \setminus \mathop{\rm Int}(\Sigma_2). 
\end{equation}
\item
The limit
$\lim_{\mathrm z \to \x, \mathrm z \in \Omega} \mu(\mathrm z) =:\mu_{\x}$ exists.
\end{enumerate}
\end{assu}

The aim of this paper is to prove elliptic regularity not only around the points from $D^\parallel$, 
but also around the closure of $D^\parallel$. Therefore, we introduce the following

\begin{definition} \label{d-typo2}
Consider the closure $\overline {D^\parallel}$ of $D^\parallel$ in $\partial \Omega$. In the sequel, we will denote  
$$R= \overline {D^\parallel} \setminus D^\parallel.$$ 
Analogously to $D^\parallel$, the set $R$ is divided into the subsets $R_c$ and
 $R_d$, where  $R_c$ is the set of points $\x$ such that $\Omega \cap B$ is connected for any ball $B$ around $\x$ with 
sufficiently small radius, and $$R_d:=R \setminus R _c.$$
\end{definition}
Note that $R \subseteq D$, since $D$ is a \emph{closed} subset of  $\partial \Omega$.
But, in contrast to the points of $D^\parallel$, it may happen here that $D$ and $N$ \emph{do touch} in points of 
$R$.
Therefore one must be careful in formulating the analytical conditions on the points of $R$.

\begin{assu} \label{a-01}
\renewcommand{\labelenumi}{\roman{enumi})}
\begin{enumerate}
\item 
For $\x \in R_d$ there is an open, connected neighbourhood $U_{\x} $ of $\x$ such that
 the set $\Omega \cap U_\x$ splits up into at most finitely many connected components $\Omega_1, \ldots, \Omega_{m}$,
each of them being a Lipschitz domain.
Moreover, for every pair of indices $i,j$ one has $\x \in \overline {\Omega_j} \cap \overline {\Omega_i }\subseteq D$.
\item
Let $N_j=N \cap \overline \Omega_j$ and  $D_j:= \partial \Omega_j \setminus N_j$. Assume
 that each $D_j$ is a $2$-set, see \eqref{e-twoset}.
If $\mu$ is an elliptic coefficient function on $\Omega$, and $\mu_j$ is the restriction to $\Omega_j$, then
\[
- \nabla \cdot \mu_j \nabla : W^{1,q}_{D_j} (\Omega_j)  \to W^{-1,q}_{D_j}  (\Omega_j), \qquad j=1,\ldots , m,
\]
is a topological isomorphism for some $q > 3$.
\end{enumerate}
\end{assu}

The following proposition shows model constellations where Assumption \ref{a-01} is fulfilled that will be of interest in the sequel.

\begin{proposition} \label{p-01} 
Let $\blacktriangle \subset \R^2$ be an open triangle with vertices $P_1, 
P _2, P_3$.
For given real numbers $a, b$ define $\Lambda:= \blacktriangle  
\times\left]a, b \right[$.\\
Let furthermore  $P $ denote the midpoint of the open segment
$\overline {P_1P_2}$, and let the (open) boundary part $\Upsilon_2$ be
\begin{equation} \label{e-Neummanrand}
\overline {P_1 P_2}\times {]a,b[} \;\;\;
\text{or} \;\;\;  \overline {P_1 P} \times{ ]a,b[ },
\end{equation}
see Figures 1 and 2,
and set $E = \partial \Lambda \setminus \Upsilon_2$.
Suppose  $\mathcal H$ to be a plane within $\R^3$ that intersects $\Lambda$,
but has a positive distance to the `ground plate' $\blacktriangle\times \{a\}$ and the `cover plate' 
$\blacktriangle\times \{b\}$. If the elliptic  coefficient function $\rho$ is 
constant on both components of $\Lambda \setminus \mathcal H$, then there is a $p>3$ such that
  \[
-\nabla \cdot \rho \nabla :W^{1,q}_{E}(\Lambda) \to W^{-1,q}_{E}(\Lambda)
  \]
  is a topological isomorphism for all $q \in [2,p[$.
\end{proposition}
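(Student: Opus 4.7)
The plan is to reduce the mixed boundary value problem to a pure Dirichlet problem on a reflected polyhedron and then invoke \proref{p-convexhetero}; this generalizes the reflection strategy of \corref{e-corspiegel} to the case of piecewise constant coefficients. Place coordinates so that $\Upsilon_2 \subset \{y=0\}$ with $\blacktriangle\subset\{y\ge 0\}$, and write $\iota:(x,y,z)\mapsto(x,-y,z)$ for the reflection across this plane. Define the doubled prism $\Lambda^*:=\Lambda\cup\iota(\Upsilon_2)\cup\iota(\Lambda)$: in Case~1, where $\Upsilon_2$ is the full side face over $\overline{P_1P_2}$, the set $\Lambda^*$ is a bounded Lipschitz polyhedron with a convex (kite-shaped) quadrilateral cross-section; in Case~2, the Dirichlet complement $\overline{PP_2}\times{]a,b[}$ persists as an interior Dirichlet interface of $\Lambda^*$.

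To ensure that the reflected coefficient has only one plane of discontinuity, first apply, if necessary, a linear shear $S:(x,y,z)\mapsto(x+sy,y,z+ty)$, which fixes $\{y=0\}$ (hence preserves $\Upsilon_2$ and the reflection symmetry) and is chosen so that $S(\mathcal H)$ is parallel to the $y$-axis and therefore $\iota$-invariant. Such $s,t$ exist whenever $\mathcal H$ is not itself parallel to $\{y=0\}$; the latter degenerate case can be handled analogously after an additional reflection, or directly. Since $S$ is linear, its Jacobian is constant, so by \proref{p-transform} the transformed coefficient $\omega$ is again elliptic, real symmetric, and piecewise constant on $S(\Lambda)\setminus S(\mathcal H)$; one thus replaces $(\Lambda,\rho,\mathcal H)$ by $(S(\Lambda),\omega,S(\mathcal H))$. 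Extending $\omega$ to $\Lambda^*$ by $\omega\circ\iota=\omega$ then leaves a single discontinuity plane in $\Lambda^*$.

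\proref{p-convexhetero} now applies to the resulting pure Dirichlet problem on $\Lambda^*$: the kite-shaped cross-section is convex, and $S(\mathcal H)$ meets the reflection plane $\{y=0\}$ orthogonally, so that the dihedral angles between $S(\mathcal H)$ and any adjacent boundary plane of $\Lambda^*$ do not exceed $\pi$. This yields, for some $p>3$, a topological isomorphism $-\nabla\cdot\omega\nabla:W^{1,q}_0(\Lambda^*)\to W^{-1,q}(\Lambda^*)$ for every $q\in[2,p[$. Restricting this isomorphism to the $\iota$-symmetric subspace -- which via $u\mapsto u|_{S(\Lambda)}$ is canonically identified with $W^{1,q}_{E^S}(S(\Lambda))$ for the Dirichlet part $E^S$ of the sheared problem -- yields the desired mixed-boundary isomorphism on $S(\Lambda)$, and pulling back by $S$ via \proref{p-transform} concludes the argument. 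The main obstacles are the careful verification of the dihedral-angle condition for the doubled polyhedron and, in Case~2, the treatment of the interior Dirichlet slit, which presumably requires either a refined form of \proref{p-convexhetero} permitting an interior Dirichlet face, or a direct symmetry/antisymmetry decomposition carried out at the level of Sobolev spaces on $\Lambda$ itself, so that no geometric gluing across the reflection plane is needed.
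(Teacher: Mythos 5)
Your proposal differs from the paper's proof, which is simply a citation of Theorems~1 and~2 in the reference \cite{haller}; you instead attempt a self-contained reduction by reflection across the Neumann face, followed by an application of Proposition~\ref{p-convexhetero}. The symmetrization idea is natural and in the right spirit (the paper itself uses this device in Corollary~\ref{e-corspiegel} and in Lemma~\ref{l-multiple1}), but as written there is a genuine gap: \emph{Case~2}, where $\Upsilon_2 = \overline{P_1P}\times{]a,b[}$, is exactly the Shamir configuration that motivates the whole paper (Dirichlet and Neumann meeting along a line), and it is precisely the case your reduction cannot handle. You say so yourself: after reflection the Dirichlet complement $\overline{PP_2}\times{]a,b[}$ survives as an interior slit, so $\Lambda^*$ is no longer a polyhedron and Proposition~\ref{p-convexhetero} is inapplicable. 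Deferring this to ``a refined form of Proposition~\ref{p-convexhetero}'' or ``a direct symmetry/antisymmetry decomposition'' is not a proof; indeed, the refined result you would need is essentially \cite[Theorem~2]{haller}, i.e.\ the very statement being proved. So the hard half of the proposition is not established.

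Even in Case~1 there are issues you would need to repair. First, the doubled cross-section is a kite but is \emph{not} convex in general (it is convex only if the triangle's angles at $P_1$ and $P_2$ are at most $\pi/2$), so that claim must be dropped; this is not fatal by itself, since Proposition~\ref{p-convexhetero} does not require convexity. Second, and more seriously, your verification of the dihedral-angle hypothesis is a non-sequitur: that $S(\mathcal H)$ is orthogonal to $\{y=0\}$ tells you nothing, because $\{y=0\}$ is an \emph{interior} plane of $\Lambda^*$, not one of its boundary faces. What the hypothesis of Proposition~\ref{p-convexhetero} requires is control of the angles between $S(\mathcal H)$ and the actual boundary faces of $\Lambda^*$, in particular at the vertical edges over $P_1$ and $P_2$, where the doubled domain can have a reentrant dihedral angle $2\theta_i>\pi$ and where $S(\mathcal H)$ may well pass through the edge. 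You give no argument that the angle condition holds there, and it is not clear that it always does. Finally, the degenerate case $\mathcal H\parallel\{y=0\}$, which you set aside with ``can be handled analogously,'' also needs an actual argument, since then $\iota(\mathcal H)\neq\mathcal H$ and the reflected coefficient has two discontinuity planes, taking you outside the scope of Proposition~\ref{p-convexhetero}. In short: the reflection reduction is a reasonable sketch for one sub-case, but the proposition as stated is not proved by it, and the essential difficulty (Case~2) is left untouched.
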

\begin{proof} The results are proved in \cite[Theorem 1 and Theorem 2]{haller}.
\end{proof}

\begin{figure}[h]
    \centerline{\includegraphics[scale=0.5]{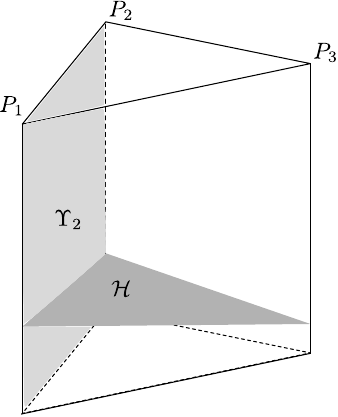}}
     \caption{The model set for the first case in \eqref{e-Neummanrand} }
  \end{figure}

\begin{figure}[h]
    \centerline{\includegraphics[scale=0.5]{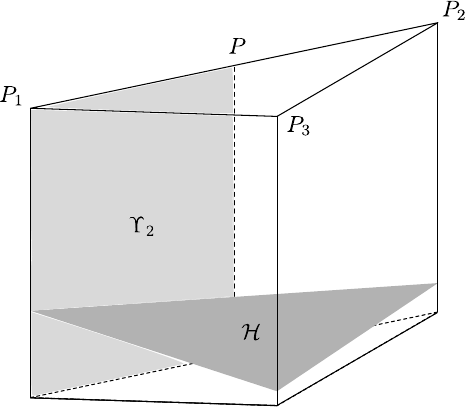}}
     \caption{The model set for the second case in \eqref{e-Neummanrand}}
  \end{figure}

\begin{rem} \label{r-movement} 
Recall that the situation described in Proposition \ref{p-01} may be carried over by bi-Lipschitz transformations
$\phi:\Lambda \to \Xi$ with $F =\phi (E)$.\\
Of particular interest are here mappings which are piecewise $C^1$ since then the subdomains of 
continuity for the coefficient function are mapped onto subdomains of $\Xi$ where the 
(transformed) coefficient function again is continuous.
\end{rem}

In a next step, we will specify the analytical assumptions on the local geometry of $\Omega$ around points in $R_c$.
\begin{assu} 
\label{d-Rout}
For every point $\x \in R_c $, there is an open, connected neighborhood $ U_{\x}$, which satisfies the 
following conditions:
\renewcommand{\labelenumi}{\roman{enumi})}
\begin{enumerate}
\item
There is a $C^1$-diffeomorphism $\phi_\x: U_{\x} \to \mathfrak C$ such that 
\begin{equation} \label{e-019}
\phi_\x( U_{\x} \cap \Omega)= \mathfrak Q \setminus \Sigma_1, \quad
\phi_\x( U_{\x} \cap R^\parallel)=
\Sigma_1\cap \mathfrak H_3^- , \quad
\phi_\x(\x)=0,
\end{equation} 
with $\Sigma_1$  defined in \eqref{e-halbflaeche}.
\item
Denoting the transformed (under $\phi_\x$) coefficient function on 
$ \mathfrak Q \setminus \Sigma_1 $ by $\underline \mu$, we require that 
both limits $\lim_{\mathrm z \in \mathfrak  Q _\pm ,
\mathrm z \to 0} \underline \mu(\mathrm z)=:\mu_\pm$ exist and are related by 
the involution $\iota$ from Section \ref{ss-prelim}: 
\begin{equation} \label{e-transiota}
\mu_-=\iota \mu_+ \iota.
\end{equation}
\item
Unless $ U_\x \cap N$ is  empty we demand
\[
\phi_\x( U_\x \cap N)= (\mathfrak C\cap \{\mathrm z :z_3=0\}) \setminus \Sigma_1,
\]
(In this case the whole top surface - minus $\Sigma_1$ -- is  the (local) Neumann part of the
boundary.)
 \\
or
\[
\phi_\x(U_\x \cap N)= (\mathfrak C\cap \{\mathrm z :z_3=0\})\cap \mathfrak H_2^+.
\]
(In this case half of the top surface is the (local) Neumann boundary part.)
\end{enumerate}
\end{assu}

\begin{rem} \label{r-fortlassen}
Suppose $\mu_- = \iota \mu_+\iota$ and define the coefficient function 
$\check \mu$ on $\mathfrak Q$ by 
\[
\check \mu (\mathrm z) = \begin{cases} 
\mu_+,& \text{if} \; \mathrm z \in \mathfrak Q_+ \\
\mu_- ,& \text{if} \; \mathrm z \in \mathfrak Q_- \\
{\rm Id}_{\R^3}  ,& \text{ on} \; \mathfrak Q \cap \{\mathrm z: z_1 =0 \}\
\end{cases}.
\]
Then $\check \mu$ satisfies the crucial relation 
\begin{equation} \label{e-crucial}
\check \mu(\iota(\mathrm z))=\iota \check \mu (\mathrm z)\iota, \quad \mathrm z 
\in \mathfrak Q_+ \cup \mathfrak Q_-.
\end{equation}
The essential point is that the coefficient function, resulting from $\check \mu$ 
by the transformation $\iota$, remains $\check \mu$, cf. Proposition \ref{p-transform} above.\\
Of course, the combination of the mapping properties of $\phi_\x$ and condition \eqref{e-transiota} for the
transformed matrix is indeed a restriction on the original constellation, see  (i) 
in the concluding remarks.
\end{rem}

\begin{proposition} \label{e-interpolatt}
Let $\Lambda \subset \R^d$ a bounded domain and $E\subset \partial \Lambda $ a $2$-set
of positive boundary measure. Suppose that there exists a linear, 
bounded extension operator $\mathfrak E:W^{1,q}_E(\Lambda) \to W^{1,q}_E(\R^d) $.
Let $\rho$ be an elliptic coefficient function. Then
\begin{equation} \label{e-sisomoro}
-\nabla \cdot \rho \nabla: W^{1,q}_E(\Lambda) \to 
 W^{-1,q}_E(\Lambda)
\end{equation}
is a topological isomorphism for $q=2$. The set of 
$q$'s, for which \eqref{e-sisomoro} is a topological
isomorphism, forms an \emph{open} interval.
\end{proposition}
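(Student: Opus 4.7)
The proposition splits into two parts. For the claim at $q=2$, I would apply the Lax-Milgram theorem to the sesquilinear form
\[
a(u,v) = \int_\Lambda \rho\,\nabla u\cdot\overline{\nabla v}\,d\x
\]
on the Hilbert space $W^{1,2}_E(\Lambda)$. Boundedness of $a$ follows from the essential boundedness of $\rho$, and coercivity reduces, via the ellipticity condition \eqref{e-ellip}, to a Poincar\'e-Friedrichs inequality $\|u\|_{L^2(\Lambda)}\le C\,\|\nabla u\|_{L^2(\Lambda)}$ on $W^{1,2}_E(\Lambda)$. The $2$-set hypothesis on $E$ together with its positive boundary measure guarantees that elements of $W^{1,2}_E(\Lambda)$ have vanishing trace on $E$ in a suitable $L^2(E,\mathcal H_2)$ sense. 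Combined with compactness of $W^{1,2}(\Lambda)\hookrightarrow L^2(\Lambda)$ (itself obtained by extending via $\mathfrak E$ and invoking Rellich on a ball containing $\overline\Lambda$), a standard compactness/contradiction argument yields the inequality. Lax-Milgram then gives the isomorphism at $q=2$.

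For the openness of the set of admissible $q$'s I would invoke \v{S}ne\u{\i}berg's stability theorem for isomorphisms on complex interpolation scales. Concretely: the extension operator $\mathfrak E$ together with the obvious restriction realizes $W^{1,q}_E(\Lambda)$ as a retract of $W^{1,q}_E(\R^d)$ for $q$ in the relevant range. The family $\{W^{1,q}_E(\R^d)\}_q$ is a complex interpolation scale; this is where the $2$-set hypothesis enters crucially, via Jonsson-Wallin trace theory for $(d-1)$-regular sets. Since retracts preserve complex interpolation, $\{W^{1,q}_E(\Lambda)\}_q$ is itself a complex interpolation scale, and by duality so is $\{W^{-1,q}_E(\Lambda)\}_q$. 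The operators $-\nabla\cdot\rho\nabla$ are consistent on this scale (they agree on the common dense subspace $C^\infty_E(\Lambda)$) and uniformly bounded on compact $q$-intervals. \v{S}ne\u{\i}berg's theorem then asserts that the set of $q$'s for which the operator is a topological isomorphism is open, and in fact that around each such $q_0$ one obtains an explicit open interval of isomorphisms -- in particular around $q=2$.

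The main obstacle lies in the second part, specifically in verifying that $\{W^{1,q}_E(\R^d)\}_q$ is a genuine complex interpolation scale. Without the $2$-set condition on $E$ the closed subspaces consisting of functions ``vanishing on $E$'' need not interpolate correctly; the $(d-1)$-regularity supplied by the $2$-set hypothesis is exactly what allows Whitney-type extensions and trace theorems to identify these subspaces as kernels of consistent trace operators across the whole scale. Once this interpolation-theoretic point is in place, the $q=2$ argument and \v{S}ne\u{\i}berg's lemma combine to give the full statement.
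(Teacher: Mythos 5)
Your proposal is correct and follows essentially the same route as the paper: the paper likewise invokes Lax--Milgram (with coercivity from the positive boundary measure of $E$) for $q=2$ and then cites the interpolation properties of the scales $\{W^{\pm1,q}_E(\Lambda)\}_q$ from \cite[Theorem 5.6]{jons} for the openness claim. You have simply unpacked that reference -- the retract construction via $\mathfrak E$, the Jonsson--Wallin role of the $2$-set hypothesis, and \v{S}ne\u{\i}berg's stability theorem are precisely the mechanism behind the cited result.
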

\begin{proof}
The case $q=2$ is implied by Lax-Milgram because the form 
\[
W^{1,2}_E(\Lambda) \ni u \mapsto \int_\Lambda \rho
\nabla u \cdot \nabla u 
\]
is coercive due to the positive boundary measure of $E$. 
The second assertion follows from the interpolation properties
of the scales $\{W^{1,q}_E(\Lambda)\}_{q \in {]1,\infty[}}$,
$\{W^{-1,q}_E(\Lambda)\}_{q \in {]1,\infty[}}$, respectively, 
see \cite[Theorem  5.6]{jons}).
\end{proof}

\section[The Main Result ] {The Main Result }\label{s-proof}

We are now in the position to formulate the main result of this paper:
\begin{theorem} \label{t-main}
Let the Assumptions {\rm \ref{a-00}, \ref{d-rigidincl}, \ref{a-01}, \ref{d-Rout}} be satisfied, and suppose that
 $u \in W^{1,2}_D(\Omega)$ is the solution of 
\[
-\nabla \cdot \mu \nabla u =f \in W^{-1,2}_D(\Omega).
\]
Then, for every $\x \in \overline {D^\parallel}$, there is an open neighborhood $ W_\x $ of $ \x$ in $\R^3$ and
a $p=p(\x)>3$ such that for every $q \in [2,p[$ the following holds: For every 
$\eta \in C^\infty_0( W_\x)$, the function $\eta u$ belongs to $W^{1,q}_0(\Omega)$, if 
$\x \in D^\parallel $, and belongs to $W^{1,q}_D(\Omega)$, if $\x \in  \overline{D^\parallel}\setminus D^\parallel $, provided that 
$f \in W^{-1,q}_D(\Omega)$.
\end{theorem}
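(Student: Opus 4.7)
My plan is to verify the hypotheses of Gröger's localization theorem (\thmref{t-groeger_local_global}) point by point on $\overline{D^\parallel}$. For each $\x$ I pick the neighbourhood $U_\x$ supplied by the relevant one among \assuref{a-00}, \assuref{d-rigidincl}, \assuref{a-01}, \assuref{d-Rout}, choose $W_\x$ with $\overline{W_\x} \subset U_\x$, and prove the required regularity for $\eta u$ whenever $\eta \in C^\infty_0(W_\x)$. The Leibniz rule applied to the weak equation yields
\[
-\nabla \cdot \mu \nabla (\eta u) = \eta f - \mu \nabla u \cdot \nabla \eta - \nabla \cdot (u \mu \nabla \eta),
\]
whose right hand side lies in $W^{-1,\tilde q}_D(\Omega)$ for some $\tilde q > 2$ by Sobolev embedding ($W^{1,2}\hookrightarrow L^6$ in three dimensions). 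A bootstrap then raises $\tilde q$ up to the exponent furnished by the model isomorphisms.

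For $\x \in D^\parallel_d$ or $\x \in R_d$, \lemref{l-01} and \corref{c-boundconn} (together with the mixed boundary analogue of the latter) show that $\eta u$ decomposes into restrictions on the Lipschitz components $\Omega_j$ which already lie in $W^{1,2}_{D_j}(\Omega_j)$. The local isomorphism hypothesis in \assuref{a-00}(ii) or \assuref{a-01}(ii) then promotes each piece to $W^{1,q}_{D_j}(\Omega_j)$ for $q$ up to some $p>3$, and reassembly returns $\eta u \in W^{1,q}_D(\Omega)$.

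For the connected cases $\x \in D^\parallel_c$ and $\x \in R_c$ I transport the problem to the model set via $\phi_\x$ using \proref{p-transform}, so that $v := (\eta u) \circ \phi_\x^{-1}$ solves an analogous equation on $\mathfrak C \setminus \Sigma$ (resp.\ $\mathfrak Q \setminus \Sigma_1$) with transformed coefficient $\omega$ continuous at $0$. Freezing $\omega$ to its value $\omega(0)$ and absorbing the remainder as an $L^\infty$-small perturbation — which preserves the isomorphism property once $W_\x$ has been shrunk enough, by a standard Neumann series argument together with the openness statement of \proref{e-interpolatt} — reduces matters to a coefficient that is constant in the $D^\parallel_c$ case and, in the $R_c$ case, satisfies the reflection symmetry \eqref{e-crucial}. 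I then split $v = v_s + v_a$ into its $\iota$-symmetric and $\iota$-antisymmetric parts: $v_a$ vanishes on $\{z_1 = 0\}$ and satisfies a pure Dirichlet problem handled by \proref{p-convexhetero} or \corref{e-corspiegel}, while $v_s$ descends to a mixed problem on $\mathfrak C_+$ (resp.\ $\mathfrak Q_+$) whose geometry matches one of the Neumann options of \assuref{d-Rout}(iii) and thus falls under \proref{p-01}.

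The main difficulty lies in this last step: one must (i) verify that the symmetric/antisymmetric split genuinely decouples the weak equation — this is precisely what \eqref{e-crucial} supplies in the $R_c$ case and is automatic for the constant coefficient of the $D^\parallel_c$ case; (ii) confirm that under $\iota$ the Dirichlet and Neumann faces of the model geometry map onto each other so that $v_s$ and $v_a$ individually solve a problem of one of the types in Propositions \ref{p-graphdomain}--\ref{p-01}, taking particular care of how the trace on $\Sigma$ is preserved on each half-cube; and (iii) pull the resulting $W^{1,q}$-regularity back through $\phi_\x^{-1}$ via \proref{p-transform}. Once these ingredients are assembled, setting $p(\x)$ to be the minimum of the exponents produced for the finitely many geometric configurations encountered around $\x$ completes the proof.
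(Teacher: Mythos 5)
Your overall architecture — localize via Gr\"oger, handle the disconnected cases $D^\parallel_d, R_d$ by decomposing over the finitely many Lipschitz components and invoking the local isomorphism hypotheses, and handle the connected cases $D^\parallel_c, R_c$ by transforming to a model constellation, freezing the coefficient, and then splitting into $\iota$-symmetric and $\iota$-antisymmetric parts — is indeed the strategy the paper follows, and the references you cite (\proref{p-transform}, \proref{e-interpolatt}, \lemref{l-localize}, \proref{p-convexhetero}, \corref{e-corspiegel}, \proref{p-01}) are the right ones.

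However, there is a genuine gap in your item (i). You assert that the decoupling of the weak equation under the symmetric/antisymmetric split is ``automatic for the constant coefficient of the $D^\parallel_c$ case.'' This is false. Freezing $\omega$ to a constant matrix $\mu_\star := \omega(0)$ does not give the required invariance: the change of variables $\iota$ transforms $\mu_\star$ into $\iota\mu_\star\iota$, and the two parts $v_s$, $v_a$ satisfy \emph{decoupled} equations only if $\mu_\star = \iota\mu_\star\iota$, i.e.\ only if $\mathrm e_1$ is an eigenvector of $\mu_\star$. A generic positive-definite symmetric limit does not have this property. The paper removes the off-diagonal entries by first composing $\phi_\x$ with a further linear change of coordinates that preserves the $y$--$z$ plane and normalizes the frozen coefficient to the identity (resp.\ to a block form commuting with $\iota$ in the $\Sigma_2,\Sigma_3$ cases); this is the content of \lemref{l-rotat} and \lemref{c-transform2}, and without it the argument breaks at exactly the step you declared automatic.

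A second, related omission: for $\x\in D^\parallel_c$ with $\Sigma = \Sigma_2$ or $\Sigma_3$, after transporting the problem and splitting, the halves $v_s|_{\mathfrak C_\pm\setminus\Sigma}$ do not land directly in any of the model geometries of \proref{p-graphdomain}--\proref{p-01}. The paper reduces these to the $\Sigma_1$ geometry by a further piecewise-linear bi-Lipschitz map inside $\lemref{c-transform2}$. Your proposal never discusses $\Sigma_2,\Sigma_3$, and a proof that silently treats only $\Sigma_1$ does not cover \assuref{d-rigidincl}. Once these two points are repaired the plan is sound and essentially coincides with the paper's proof, including its key auxiliary statement \lemref{l-multiple1} which packages the symmetrization argument for the model constellations.
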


\subsection{Localization principles}
The next lemma provides the possibility of localizing the elliptic problem:

\begin{lemma} \label{l-cutoff}
 Let $\Lambda \subset \R^d$ be a bounded Lipschitz domain, $E$  a closed subset of the boundary, and 
$ V \subset \R^d$  bounded and open.  Let $\rho$ be an elliptic coefficient function on $\Lambda$.
Putting $M:=\partial \Lambda \setminus E$, define $\Lambda_\bullet:=\Lambda \cap V $, 
$M_\bullet:=M \cap V $, $E_\bullet:=
\partial \Lambda_\bullet \setminus M_\bullet$.
Fix an arbitrary Lipschitz function $\eta$ with support in $ V $.
Then, for every $q \in [1,\infty[$, the following holds:
\renewcommand{\labelenumi}{\roman{enumi})}
\begin{enumerate}
\item \label{l-cutoff-1}
If $v \in W^{1,q}_E(\Lambda)$, then $\eta v|_{\Lambda_\bullet} \in W^{1,q}_{E_\bullet}(\Lambda_\bullet)$.
In particular, if $ V  \cap M =\emptyset$, then $\eta v \in W^{1,q}_0(\Lambda_\bullet)$.
\item  \label{l-cutoff-2}
For any $w \in L^1(\Lambda_\bullet)$ denote by  $\widetilde w$ the extension of $w$
to $\Lambda$ by $0$. \\
Then the mapping
\[
W^{1,q}_{E_\bullet}(\Lambda_\bullet) \ni v \mapsto \widetilde {\eta v}
\]
has its image in $W^{1,q}_E(\Lambda)$ and is continuous.
\item  \label{l-cutoff-3}
Suppose that, for $q \in {]1,3[}$, there is the usual embedding $W^{1,q}_{E_\bullet}(\Lambda_\bullet)
\hookrightarrow L^{\frac {3q}{3-q}}(\Lambda_\bullet)$. 
Let $v \in W^{1,2}_E(\Lambda)$  be the solution of 
\[
-\nabla \cdot \rho \nabla v =f \in W^{-1,q}_E(\Lambda) \hookrightarrow  W^{-1,2}_E(\Lambda) , \; q \in [2,6].
\]
Then $v_\bullet:=(\eta v)|_{\Lambda_\bullet} $ satisfies an equation
\[
-\nabla \cdot \rho_\bullet \nabla v_\bullet =f_\bullet \in W^{-1,q}_{E_\bullet}(\Lambda_\bullet)
\]
with $\rho_\bullet = \rho|_{\Lambda_\bullet}$. Moreover, for $q \in [2,6]$, the mappping 
$W^{-1,q}_E(\Lambda) \ni f \mapsto f_\bullet \in W^{-1,q}_{E_\bullet}(\Lambda_\bullet)$
is linear and continuous.
\end{enumerate}
\end{lemma}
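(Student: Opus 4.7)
My overall plan is to handle all three parts through a single approximation scheme, exploiting the geometric observation that $\eta$ has compact support at strictly positive distance from $\partial V$ (hence $\eta$ vanishes in an open neighborhood of $\partial V$), together with the decomposition $E_\bullet \subseteq (E\cap V) \cup (\overline{\Lambda}\cap \partial V)$. Indeed, a point of $\partial \Lambda_\bullet$ lying in $V$ must belong to $\partial\Lambda$, and is therefore either in $M\cap V = M_\bullet$ or in $E\cap V$; a point of $\partial \Lambda_\bullet$ outside $V$ must lie in $\partial V$. Conversely, every point of $E\cap V$ lies in $\partial \Lambda\cap \overline{\Lambda_\bullet}$, so $E\cap V\subseteq E_\bullet$. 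Throughout, I reduce to test functions from $C^\infty_0(\R^d)$ and push the boundary constraint through mollification.

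For part (i), take $v = \psi|_\Lambda$ with $\psi\in C^\infty_0(\R^d)$, $\supp\psi\cap E=\emptyset$. Then $\eta\psi$ is Lipschitz on $\R^d$, and $\supp(\eta\psi)\cap E_\bullet = \emptyset$: inside $V$ the set $E_\bullet$ is contained in $E$, which $\psi$ avoids, while the part of $E_\bullet$ lying in $\partial V$ is separated from $\supp\eta$. Mollifying $\eta\psi$ with a sufficiently small parameter yields $C^\infty_0(\R^d)$ approximations whose supports remain disjoint from $E_\bullet$, exhibiting $(\eta\psi)|_{\Lambda_\bullet}\in C^\infty_{E_\bullet}(\Lambda_\bullet)$. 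The claim for general $v\in W^{1,q}_E(\Lambda)$ then follows because multiplication by the Lipschitz function $\eta$ is bounded on $W^{1,q}$ and $W^{1,q}_{E_\bullet}(\Lambda_\bullet)$ is closed. Part (ii) is the dual argument: starting from $v=\psi_n|_{\Lambda_\bullet}$ with $\psi_n\in C^\infty_0(\R^d)$ avoiding $E_\bullet$, the product $\eta\psi_n$ has $\supp\subseteq V$ and avoids $E$ (because $E\cap V\subseteq E_\bullet$ on the one hand, and $\eta$ vanishes off $V$ on the other); its mollifications lie in $C^\infty_E(\Lambda)$, and the identification $\eta\psi_n|_\Lambda = \widetilde{\eta v_n}$ produces $\widetilde{\eta v}\in W^{1,q}_E(\Lambda)$ in the limit, with the continuity bound $\bignorm{\widetilde{\eta v}}_{W^{1,q}(\Lambda)} \le (\norm{\eta}_\infty + \mathrm{Lip}(\eta))\,\norm{v}_{W^{1,q}(\Lambda_\bullet)}$ being immediate.

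For part (iii), the product rule $\nabla v_\bullet = \eta\nabla v + v\nabla\eta$ and the identity $\int \eta\rho\nabla v\cdot\nabla w = \int \rho\nabla v\cdot\nabla(\eta w) - \int w\,\rho\nabla v\cdot\nabla\eta$ give, for any test function $w\in W^{1,q'}_{E_\bullet}(\Lambda_\bullet)$,
\begin{equation*}
\dual{-\nabla\cdot\rho_\bullet\nabla v_\bullet}{w}
\;=\; \dual{f}{\widetilde{\eta w}}
\;-\; \int_{\Lambda_\bullet} w\,\rho\nabla v\cdot\nabla\eta\,d\x
\;+\; \int_{\Lambda_\bullet} v\,\rho\nabla\eta\cdot\nabla w\,d\x,
\end{equation*}
where the first integrated piece is recognized as a pairing on $\Lambda$ via part (ii) applied to $w$. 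Each of the three contributions defines a continuous functional in $f$: the first by part (ii) directly; the second via the Sobolev embedding $W^{1,q'}(\Lambda_\bullet)\hookrightarrow L^2(\Lambda_\bullet)$ (valid for $q'\ge 6/5$, i.e.\ $q\le 6$, since the Sobolev exponent $3q'/(3-q')$ reaches $2$ at $q'=6/5$) paired with the Lax--Milgram bound $\norm{v}_{W^{1,2}_E}\le c\,\norm{f}_{W^{-1,2}_E}\le c'\,\norm{f}_{W^{-1,q}_E}$, the latter using $W^{-1,q}_E\hookrightarrow W^{-1,2}_E$ for $q\ge 2$ by duality; the third via $v\in W^{1,2}_E(\Lambda)\hookrightarrow L^6(\Lambda)\hookrightarrow L^q(\Lambda)$ for $q\le 6$, which exactly balances $\nabla w\in L^{q'}$ in the Hölder estimate.

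The main obstacle I anticipate is keeping the arguments in (iii) compatible with the Dirichlet condition on $E_\bullet$. Part (ii) is precisely what is needed to make the pairing $\dual{f}{\widetilde{\eta w}}$ meaningful on the ambient domain $\Lambda$ while retaining the vanishing of the test function on $E$; absent this, the formula for $f_\bullet$ would be purely formal. The ceiling $q\le 6$ is rigid in dimension three — it is the largest exponent at which $W^{1,2}\hookrightarrow L^q$ — so the statement cannot extend beyond it without strengthening the regularity of $v$ itself.
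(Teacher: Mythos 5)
The paper gives no in-text argument for this lemma; it simply refers to \cite[Lemmas 5.8 and 5.9]{disser}, so there is no in-paper proof to compare against. Your self-contained proof is correct and takes the expected cut-off/mollification route. The three geometric observations you front-load --- $\dist(\supp\eta,\partial V)>0$, $E_\bullet\subseteq(E\cap V)\cup(\overline\Lambda\cap\partial V)$, and $E\cap V\subseteq E_\bullet$ --- are exactly the right infrastructure, and they also yield that $E_\bullet$ is \emph{closed} (so that $W^{1,q}_{E_\bullet}(\Lambda_\bullet)$ is even well defined), a point worth recording explicitly. Two minor cleanups: (a) in part (i), $(\eta\psi)|_{\Lambda_\bullet}$ is Lipschitz rather than smooth, so the mollification exhibits it as an element of $W^{1,q}_{E_\bullet}(\Lambda_\bullet)$, not of $C^\infty_{E_\bullet}(\Lambda_\bullet)$ as you wrote; (b) in part (iii), the bounds on the two commutator terms and the asserted continuity of $f\mapsto f_\bullet$ both hinge on the energy estimate $\|v\|_{W^{1,2}_E(\Lambda)}\le c\,\|f\|_{W^{-1,2}_E(\Lambda)}$, which presupposes coercivity of the form on $W^{1,2}_E(\Lambda)$, i.e.\ a Poincar\'e-type inequality adapted to $E$. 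That is tacit in the lemma's phrase ``the solution'' --- without it the map $f\mapsto v$, and hence $f\mapsto f_\bullet$, is not even single-valued --- but your write-up should flag the assumption, since the lemma as stated places no lower bound on $E$.
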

\begin{proof}
See \cite[Lemma 5.8 and 5.9]{disser}.
\end{proof}

\begin{lemma} \label{l-localize}
Let $\Lambda \subset \R^3$ be a bounded domain and $E$ be a closed part
of its boundary. Suppose the validity of the embeddings $W^{1,q}_E(\Lambda) \hookrightarrow L^{\frac {3q}{3-q}}$
for $ q \in [\frac {6}{5},2]$.\\
Let $\Lambda$ be the disjoint union $\Lambda=\Lambda_1 \cup \Lambda_2 \cup (\Lambda \cap  M)$, where $\Lambda_{1,2}\subset \Lambda$
are open, and $ M \subset \R^3$ is closed and Lebesgue negligible. 
Assume that  a fixed  $\x \in  M $ is an accumulation point 
for both  $\Lambda_1$ and $\Lambda_2$, and that $\lim_{{\y} \in \Lambda_j, {\y}\to {\x}}\rho({\y})=:\rho_j$, $j = 1,2$,
exists. Define the coefficient function $\hat \rho$ by 
\[
\hat \rho({\y})=\begin{cases}
\rho_j ,&  {\y} \in \Lambda_j, \;j =1,2 \\
 {\rm Id}_{\R^3},&{\y} \in  \Lambda \cap  M,
\end{cases}
\]
and suppose that $\nabla \cdot \hat \rho \nabla:W^{1,q}_E(\Lambda) \to
W^{-1,q}_E(\Lambda)$ is a topological isomorphism for a $q \in [2,6]$.
Then the following holds: 
\renewcommand{\labelenumi}{\roman{enumi})}
\begin{enumerate}
\item
If $ V$ is any sufficiently small neighborhood of $\x$ in $\R^3$, and $\rho$ is changed to
\[
 \rho_ { V}({\y})=\begin{cases}
\hat\rho(\y) &\y\in \Lambda \setminus  V\\
\rho(\y)  &  {\y} \in \Lambda \cap V.
\end{cases}
\]
then 
\begin{equation} \label{e-iiso}
\nabla \cdot  \rho_ { V} \nabla:W^{1,q}_E(\Lambda) \to W^{-1,q}_E(\Lambda)
\end{equation}
 remains a topological isomorphism for this same $q$. 
\item 
For every sufficiently small neighbourhood $ V$ of ${\x}$ and any Lipschitz continuous function $\eta$ with support in $ V$,
 the function $\eta u$ belongs to $W^{1,q}_E(\Lambda)$, provided that
$u \in W^{1,2}_E(\Lambda)$ satisfies $-\nabla \cdot \rho \nabla u =f \in W^{-1,q}_E(\Lambda)$.
\end{enumerate}
\end{lemma}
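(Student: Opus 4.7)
The strategy is to prove (i) by a perturbation argument, and then to exploit the resulting isomorphism to identify $\eta u$ with the (unique) solution of an appropriate equation, yielding (ii).

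For (i), I would observe that $\rho_V$ and $\hat{\rho}$ agree outside $V \cap \Lambda$, while on $V \cap \Lambda_j$ one has $|\rho_V(\y) - \hat{\rho}(\y)| = |\rho(\y) - \rho_j|$, which tends to $0$ as $\y \to \x$. Consequently, shrinking $V$ around $\x$ makes $\|\rho_V - \hat{\rho}\|_{L^\infty(\Lambda)}$ arbitrarily small, so that the difference operator $-\nabla \cdot (\rho_V - \hat{\rho}) \nabla$ has an arbitrarily small operator norm as a map $W^{1,q}_E(\Lambda) \to W^{-1,q}_E(\Lambda)$. Since the set of topological isomorphisms is an open subset of $\mathcal{L}(W^{1,q}_E, W^{-1,q}_E)$, the isomorphism property for $\hat{\rho}$ transfers to $\rho_V$ once $V$ is sufficiently small, the admissible size depending on $\|(-\nabla\cdot\hat{\rho}\nabla)^{-1}\|$.

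For (ii), Lemma \ref{l-cutoff}(i) immediately gives $\eta u \in W^{1,2}_E(\Lambda)$. A Leibniz computation then produces
\[
-\nabla\cdot\rho\nabla(\eta u)\;=\;\eta f\;-\;\nabla\eta \cdot \rho\nabla u\;-\;\nabla\cdot(u\rho\nabla\eta),
\]
originally understood in $W^{-1,2}_E(\Lambda)$. Because $\nabla(\eta u)$ is supported in $V$ and $\rho_V = \rho$ on $V$, the left-hand side coincides with $-\nabla\cdot\rho_V\nabla(\eta u)$. I would next verify that the right-hand side defines an element of $W^{-1,q}_E(\Lambda)$ for every $q \in [2,6]$: the term $\eta f$ by a standard Lipschitz-multiplier argument (multiplication by $\eta$ preserves $W^{1,q'}_E$, whence by duality $W^{-1,q}_E$); the term $\nabla\eta\cdot\rho\nabla u \in L^2(\Lambda)$, which embeds into $W^{-1,q}_E$ for $q \in [2,6]$ by dualising the hypothesised embedding $W^{1,q'}_E \hookrightarrow L^{3q'/(3-q')}$ with $q' \in [6/5,2]$; and $\nabla\cdot(u\rho\nabla\eta) \in W^{-1,6}_E(\Lambda)$, since $u \in L^6(\Lambda)$ (by the $q'=2$ case of the same embedding) implies $u\rho\nabla\eta \in L^6(\Lambda)^3$. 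By (i), the operator $-\nabla\cdot\rho_V\nabla$ is an isomorphism $W^{1,q}_E(\Lambda) \to W^{-1,q}_E(\Lambda)$, producing a unique $v \in W^{1,q}_E(\Lambda)$ solving the resulting equation. Both $v$ and $\eta u$ then solve the same equation in $W^{-1,2}_E(\Lambda)$, so injectivity of $-\nabla\cdot\rho_V\nabla$ at $q=2$ (secured by Lax-Milgram when $E$ carries positive boundary measure, which is the situation in all the applications inside the paper) forces $v = \eta u \in W^{1,q}_E(\Lambda)$.

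The most delicate point I anticipate is the careful bookkeeping that each term on the right-hand side of the equation for $\eta u$ defines an element of $W^{-1,q}_E(\Lambda)$ and not merely of a broader dual space, i.e.\ that the $E$-vanishing condition on the test functions is genuinely respected by all the integrations by parts invoked; the $\nabla\cdot(u\rho\nabla\eta)$ term in particular lives naturally in $W^{-1,6}_E$ only through the Sobolev embedding supplied as a standing hypothesis of the lemma. The uniqueness step also silently relies on injectivity of $-\nabla\cdot\rho_V\nabla$ at $q=2$; this is harmless in the intended applications but deserves explicit mention.
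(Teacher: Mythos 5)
Your proposal is correct and follows essentially the same route as the paper: part (i) by the identical $L^\infty$-perturbation argument, part (ii) by deriving a localized equation for $\eta u$, observing that $\rho$ may be replaced by $\rho_V$ on the support of $\eta u$ and of the right-hand side, and invoking the isomorphism from (i). The only cosmetic difference is that the paper obtains the localized equation and the membership of its right-hand side in $W^{-1,q}_E(\Lambda)$ in one stroke by citing Lemma~\ref{l-cutoff}(iii), whereas you reproduce the underlying Leibniz computation and the Sobolev-embedding bookkeeping explicitly; the uniqueness-at-$q=2$ consistency point you flag at the end is indeed used tacitly in the paper's proof as well and is harmless in the intended applications.
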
 

\begin{proof}
i) Given $\epsilon >0$,  the assumption on $\rho$ implies that 
 $\| \rho _{ V}- \hat \rho \|_{L^\infty(\Lambda)}< \epsilon$
provided the neighbourhood $ V$ is sufficiently small. Taking into account the estimate
\begin{equation} \label{e-opnormgegenLinftiy}
\|\nabla \cdot (\hat \rho - \rho_ { V}) \nabla \|_{\mathcal L(W^{1,q}_E(\Lambda);W^{-1,q}_E(\Lambda))} \le 
\| \rho_ { V} - \hat \rho \|_{L^\infty(\Lambda)}
\end{equation}
and writing
\[
-\nabla \cdot \rho_ { V} \nabla = -\nabla \cdot \hat \rho \nabla +\nabla \cdot (\hat \rho - \rho_ { V}) \nabla ,
\]
this shows that $-\nabla \cdot  \rho_ {V}  \nabla $ is a perturbation of $- \nabla \cdot \hat \rho \nabla$,
and the difference of both can be made arbitrarily small in norm by taking $V$ small.
Hence, the first assertion follows by \eqref{e-opnormgegenLinftiy} and classical perturbation theory.\\
ii) Let $ V$ be a neighborhood of $\x$, such that \eqref{e-iiso} remains an isomorphism.
Further, let $ W$ be an open neighborhood of $\overline { \Lambda} \cup \overline { V}$ 
and $\eta$  be a Lipschitz continuous function  on $W$ with support in $ V \subset  W$.
Assume now that $u \in W^{1,2}_E(\Lambda)$ satisfies $-\nabla \cdot \rho \nabla u =f \in W^{-1,q}_E(\Lambda)$.
Applying Lemma \ref{l-cutoff}(iii)  one sees that the function $\eta|_\Lambda u \in W^{1,2}_E(\Lambda)$ satisfies an equation
$-\nabla \cdot \rho \nabla (\eta|_\Lambda u) =f_\bullet \in W^{-1,q}_E(\Lambda)$.
Clearly, $\eta|_\Lambda u$ vanishes identically  on $\Lambda \setminus  V$, and also $f_\bullet$ has no support in
$\Lambda \setminus V$. Consequently, the function $\eta|_\Lambda u$ also satisfies the elliptic equation
$-\nabla \cdot \rho_V \nabla (\eta|_\Lambda u) =f_\bullet $. Then the isomorphism property \eqref{e-iiso} implies
the second assertion.
\end{proof}

\begin{lemma} \label{l-001}
Let $\Lambda_1, \dots,\Lambda_m$ be mutually disjoint bounded open sets in $\R^d$ and  $\Lambda := \cup_{j=1}^m \Lambda_j$. Moreover, let $E \subseteq  \partial  \Lambda$ be closed and  $E_j:= \partial \Lambda_j \cap E$.  
Then 
\renewcommand{\labelenumi}{\roman{enumi})}
\begin{enumerate}
\item  $E$ is the union of the sets $E_j$, $j=1,\ldots,m$. 
\item
Suppose that $\overline {\Lambda_j} \cap \overline {\Lambda_i} \subseteq E$ for every pair of indices $i\not=j$. 
For fixed $j \in \{1,\ldots,m\}$ and $\psi \in C^\infty_{E_j}(\Lambda_j)$, let $\Psi \in C^\infty_E(\R^d)$ be any function whose restriction to $\Lambda_j$ equals $\psi$. Then $\mathop{\rm supp} \Psi \cap \overline \Lambda_j$ and 
$\mathop{\rm supp} \Psi \cap \bigl (\cup_{k \neq j} \overline {\Lambda_k}\bigr )$ have a positive distance to each other.
\end{enumerate}
\end{lemma}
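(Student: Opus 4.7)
My plan is to handle the two parts with only elementary set-theoretic and topological arguments, since neither claim requires the analytical machinery set up earlier in the paper; the lemma is essentially a bookkeeping statement to be invoked later in gluing arguments.

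For part (i), the inclusion $\bigcup_j E_j \subseteq E$ is immediate from the definition $E_j = \partial \Lambda_j \cap E$. For the reverse, I would take $\x \in E \subseteq \partial \Lambda = \overline{\Lambda}\setminus \Lambda$ and exploit the fact that the union $\Lambda = \bigcup_{j=1}^m \Lambda_j$ is \emph{finite}, so $\overline{\Lambda} = \bigcup_j \overline{\Lambda_j}$. Choosing $j_0$ with $\x \in \overline{\Lambda_{j_0}}$ and using $\x \notin \Lambda \supseteq \Lambda_{j_0}$ gives $\x \in \overline{\Lambda_{j_0}}\setminus \Lambda_{j_0} = \partial \Lambda_{j_0}$ and hence $\x \in E_{j_0}$. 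Note that the reverse inclusion $\bigcup_j \partial \Lambda_j \subseteq \partial \Lambda$ from \lemref{l-disjointopen} is not actually needed for this direction.

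For part (ii), the central observation is that in a metric space two \emph{compact} disjoint sets have positive distance. I would therefore proceed in three short steps. First, unwind the definition of $C^\infty_E(\R^d)$: $\Psi$ is an element of $C_0^\infty(\R^d)$ with $\supp \Psi$ compact and $\supp \Psi \cap E = \emptyset$. Second, set $A := \supp \Psi \cap \overline{\Lambda_j}$ and $B := \supp \Psi \cap \bigl(\bigcup_{k \neq j}\overline{\Lambda_k}\bigr)$; both are compact, as intersections of a compact set with a finite union of closed sets. Third, show $A \cap B = \emptyset$: any common point $\x$ would lie in $\overline{\Lambda_j} \cap \overline{\Lambda_k}$ for some $k \neq j$, so by the standing hypothesis $\x \in E$, yet $\x \in \supp \Psi$, contradicting $\supp \Psi \cap E = \emptyset$. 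Disjointness together with compactness of $A$ and $B$ then yields the desired positive distance.

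There is no substantial obstacle here; the statement is structural rather than analytical. The one point worth emphasizing is that the hypothesis $\overline{\Lambda_j} \cap \overline{\Lambda_i} \subseteq E$ for $i \ne j$ is used in \emph{exactly one place}, namely in the third step above, where it converts a potential common point of $A$ and $B$ into a point of $E$ and hence into a contradiction with $\supp \Psi \cap E = \emptyset$. If the disjoint open sets $\Lambda_j$ were allowed to abut along a portion of boundary not contained in $E$, the conclusion would fail at such abutment points.
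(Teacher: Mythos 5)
Your proof is correct and follows essentially the same route as the paper: part (i) via $\overline{\Lambda} = \bigcup_j \overline{\Lambda_j}$ (finiteness) and openness of $\Lambda, \Lambda_j$, and part (ii) by contradiction using that disjoint compacta have positive distance, with the separation hypothesis converting a putative common point into a point of $E$ and hence into a contradiction with $\supp\Psi\cap E=\emptyset$. Your one small refinement — that for (i) only the inclusion $\partial\Lambda\subseteq\bigcup_j\partial\Lambda_j$ is needed and the reverse inclusion from \lemref{l-disjointopen} can be dispensed with — is accurate; the paper invokes the full equality, but as you note the easy inclusion $\bigcup_j E_j\subseteq E$ makes the other direction superfluous.
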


\begin{proof}
i)  Since  $\partial \Lambda \subseteq \cup_j \partial \Lambda_j$, \eqref{e-boundaryident}
shows that $\partial \Lambda = \cup_j \partial \Lambda_j$ and hence  the assertion.
 ii) Suppose that the claim is false. Since both sets, $\mathop{\rm supp }\Psi\cap \overline \Lambda_j$
 and $\mathop{\rm supp }\Psi \cap \bigl (\cup_{k \neq j} \overline {\Lambda_k}\bigr )$ are compact, they must then possess a common point, say $\y$. According to the assumption that  $\overline {\Lambda_j} \cap \overline {\Lambda_i}
\subseteq E$, $\y$ then must belong to $E$, and, due to the definition of $E_j$, also to this 
set. But $\mathop{\rm supp }\Psi$ has an empty intersection with $E_j$ -- which is a contradiction.
\end{proof}

\begin{lemma} \label{l-0x1}
Adopt the notation and assumptions of Lemma \ref{l-001}. For fixed $j \in \{1,\ldots,m\}$ and 
$\psi \in C^\infty_{E_j}(\Lambda_j)$ define an extension $\hat \psi$ to $\Lambda$ as follows: 
Let $\Psi \in C^\infty_{ E_j}(\R^d)$ be any function whose restriction to $\Lambda_j$ equals $\psi$.
Take, according to Lemma \ref{l-001}, any function $\eta \in C^\infty_0(\R^d)$ which equals $1$ on 
$\mathop{\rm supp }\Psi\cap \overline \Lambda_j$ and $0$ on 
$\mathop{\rm supp }\Psi \cap \bigl (\cup_{k \neq j} \overline {\Lambda_k}\bigr )$. We then define  $\hat \psi$ as the 
restriction of $\eta \Psi$ to $\Lambda$. Then:
\renewcommand{\labelenumi}{\roman{enumi})}
\begin{enumerate}
\item
$\hat \psi $ is the extension to $ \Lambda$ by $0$ and does neither depend on the function $\Psi$ 
within the class $ C^\infty_E(\R^d)$ nor on $\eta$.
\item
$\hat \psi $ belongs to $C^\infty_{E}(\Lambda)$.
\item
$\|\hat \psi \|_{W^{1,p}(\Lambda)} = 
\|\psi \|_{W^{1,p}(\Lambda_j )} $ for $p \in {]1,\infty[}$.
Hence, the mapping 
\[
C^\infty_{E_j}(\Lambda_j) \ni \psi \mapsto \hat \psi \in C_{E }^{\infty}(\Lambda)
\]
extends by density to an isometric mapping
\[
W_{E_j}^{1,p}(\Lambda_j ) \ni \psi \mapsto \hat \psi \in W_{E}^{1,p}(\Lambda).
\]
\end{enumerate}
\end{lemma}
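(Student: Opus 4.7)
The plan is to handle the three claims in order: (i) is a direct unfolding of definitions, (ii) hinges on localizing the cut-off $\eta$ close to $\overline{\Lambda_j}$, and (iii) reduces to a measure-theoretic computation plus density.

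For (i), I would simply evaluate $\eta\Psi$ on each component. On $\Lambda_j$, any point where $\Psi \neq 0$ lies in $\supp\Psi \cap \overline{\Lambda_j}$, where $\eta = 1$, so $\eta\Psi = \Psi = \psi$ at that point; elsewhere on $\Lambda_j$ both $\Psi$ and $\eta\Psi$ vanish. On $\Lambda_k$ with $k \neq j$, any point where $\Psi \neq 0$ lies in $\supp\Psi \cap \overline{\Lambda_k}$, where $\eta = 0$, so $\eta\Psi = 0$ there. Thus $\hat\psi$ coincides with the extension of $\psi$ to $\Lambda$ by zero, independently of the choice of $\Psi$ within $C^\infty_{E_j}(\R^d)$ and of $\eta$.

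For (ii), the function $\eta\Psi$ is obviously in $C^\infty_0(\R^d)$; the task is to show $\supp(\eta\Psi) \cap E = \emptyset$. By Lemma \ref{l-001}(ii) the compact sets $K := \supp\Psi \cap \overline{\Lambda_j}$ and $K' := \supp\Psi \cap \bigl(\cup_{k\neq j}\overline{\Lambda_k}\bigr)$ are at positive distance, say $\epsilon$. I would therefore select $\eta$ with $\supp\eta$ contained in the $\epsilon/2$-neighborhood of $K$, so that $\supp(\eta\Psi) \subseteq \supp\eta \cap \supp\Psi$ lies inside this neighborhood and, in particular, is disjoint from every $\overline{\Lambda_k}$ with $k \neq j$. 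Combined with $\supp\Psi \cap E_j = \emptyset$ (from $\Psi \in C^\infty_{E_j}(\R^d)$) and the decomposition $E = \cup_k E_k$ together with $E_k \subseteq \overline{\Lambda_k}$ furnished by Lemma \ref{l-001}(i), this yields $\supp(\eta\Psi) \cap E = \emptyset$, so that $\hat\psi \in C^\infty_E(\Lambda)$.

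For (iii), the norm identity follows at once from (i): since $\Lambda$ is the disjoint union of the $\Lambda_k$ and $\hat\psi$ vanishes outside $\Lambda_j$, the integrals defining $\|\hat\psi\|_{W^{1,p}(\Lambda)}^p$ collapse to $\int_{\Lambda_j}(|\psi|^p + |\nabla\psi|^p)$. Hence $\psi \mapsto \hat\psi$ is an isometry $C^\infty_{E_j}(\Lambda_j) \to C^\infty_E(\Lambda) \subset W^{1,p}_E(\Lambda)$, which extends by density to an isometry $W^{1,p}_{E_j}(\Lambda_j) \to W^{1,p}_E(\Lambda)$, the target being a \emph{closed} subspace of $W^{1,p}(\Lambda)$. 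The only delicate step I anticipate is the support argument in (ii); without the hypothesis $\overline{\Lambda_j}\cap\overline{\Lambda_i}\subseteq E$ inherited from Lemma \ref{l-001}, the compacta $K$ and $K'$ need not be separated and the localization of $\eta$ would break down.
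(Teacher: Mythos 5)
Your proof is correct and fills in what the paper leaves implicit: the published proof of this lemma is literally the single sentence ``The proof follows from Lemma~\ref{l-001},'' and your argument is the natural unpacking of that pointer. The one genuinely subtle step --- that the stated conclusion $\hat\psi\in C^\infty_E(\Lambda)$ requires choosing $\eta$ with support in a small neighborhood of $\supp\Psi\cap\overline{\Lambda_j}$, which is legitimate because part (i) shows $\hat\psi$ is independent of $\eta$ --- is handled correctly, and the rest (pointwise evaluation for (i), disjoint-union splitting of the $W^{1,p}$ integrals plus density and closedness of $W^{1,p}_E(\Lambda)$ for (iii)) is exactly as intended.
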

\begin{proof}
The proof follows from Lemma \ref{l-001}.
\end{proof}

Let us, in the terminology of the preceding lemma, associate to every $f \in W^{-1,q}_{E}(\Lambda)$
elements $ f_j \in  W^{-1,q}_{ E_j }(\Lambda_j )$ by defining 
\begin{equation} \label{e-deffj}
\langle  f_j, \psi \rangle := \langle f, \hat \psi \rangle \quad \text{for} \quad \psi \in W^{1,q'}_{E_j}(\Lambda_j).
\end{equation}
 Obviously, for $f \in L^2(\Lambda ) \hookrightarrow W^{-1,q}_{E}(\Lambda )$, 
$q \in [2,6]$, $f_j$ is just the restriction of $f$ to $\Lambda_j$.\\
 
\begin{lemma} \label{l-031}
Adopt the notation and assumptions of Lemma \ref{l-001}.
\renewcommand{\labelenumi}{\roman{enumi})}
\begin{enumerate}
\item
For  $u \in W^{1,2}_{E}(\Lambda )$ the function  $v:= u|_{\Lambda_j}$ belongs to $W^{1,2}_{ E_j }(\Lambda_j )$.
\item
 If $u \in W^{1,2}_{E }(\Lambda)$ satisfies the equation
\[
- \nabla \cdot \rho \nabla u = f \in W^{-1,q}_{E }(\Lambda), \quad q \ge 2
\]
then $v:= u|_{\Lambda_j}$ satisfies 
\[
- \nabla \cdot \rho|_{\Lambda_j} \nabla v =   f_j \in W^{-1,q}_{ E_j }(\Lambda_j).
\]
\end{enumerate}
\end{lemma}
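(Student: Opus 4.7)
The plan is to exploit the isometric extension-by-zero operator $\psi \mapsto \hat \psi$ constructed in Lemma~\ref{l-0x1}, which bridges functions on $\Lambda_j$ and functions on $\Lambda$ while respecting the boundary condition on $E$. Once this tool is accepted, both assertions reduce to bookkeeping.

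For part (i), I would argue by density. Pick a sequence $v_n \in C^\infty_0(\R^d)$ with $\supp v_n \cap E = \emptyset$ such that $v_n|_\Lambda \to u$ in $W^{1,2}(\Lambda)$. Since $E_j \subseteq E$, the restrictions $v_n|_{\Lambda_j}$ belong to $C^\infty_{E_j}(\Lambda_j)$, and they converge to $u|_{\Lambda_j}$ in $W^{1,2}(\Lambda_j)$ as restriction is norm-decreasing. Hence $u|_{\Lambda_j} \in W^{1,2}_{E_j}(\Lambda_j)$.

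For part (ii), I would test the local equation against an arbitrary $\psi \in W^{1,q'}_{E_j}(\Lambda_j)$ using its extension $\hat \psi \in W^{1,q'}_{E}(\Lambda)$. Because $\hat \psi$ is the extension by $0$, the integral over $\Lambda$ collapses to the integral over $\Lambda_j$:
\[
\int_{\Lambda_j} \rho \nabla v \cdot \nabla \overline \psi \, d\x
= \int_\Lambda \rho \nabla u \cdot \nabla \overline{\hat \psi} \, d\x
= \langle -\nabla \cdot \rho \nabla u, \hat \psi \rangle
= \langle f, \hat \psi \rangle
= \langle f_j, \psi \rangle,
\]
where the last equality is the definition~\eqref{e-deffj} of $f_j$. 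Since $\psi$ was arbitrary, $-\nabla \cdot \rho|_{\Lambda_j} \nabla v = f_j$ in $W^{-1,q}_{E_j}(\Lambda_j)$.

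There is essentially no obstacle here beyond correctly splitting the integral. All the delicate work --- verifying that the zero extension $\hat \psi$ genuinely lies in $W^{1,q'}_{E}(\Lambda)$ (and not merely in $W^{1,q'}(\Lambda)$) --- has already been discharged in Lemma~\ref{l-0x1}, which in turn rests on the separation statement from Lemma~\ref{l-001}(ii). The only minor point worth flagging is that the splitting $\int_\Lambda = \sum_k \int_{\Lambda_k}$ is legitimate because the $\Lambda_k$ are pairwise disjoint and their pairwise closures intersect inside $E \subseteq \partial \Lambda$, a Lebesgue negligible set; for $k \neq j$ the contribution vanishes since $\hat \psi|_{\Lambda_k} \equiv 0$.
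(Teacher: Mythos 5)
Your proof is correct and rests on the same two pillars as the paper's terse one-line argument: the extension-by-zero isometry of Lemma~\ref{l-0x1} and the definition~\eqref{e-deffj} of $f_j$. The only real difference is in part (ii): the paper first verifies the identity for $f\in L^2(\Lambda)$, where $f_j$ is literally the restriction $f|_{\Lambda_j}$, and then passes to general $f$ by density and continuity of the maps involved, whereas you compute directly for arbitrary $f$ by feeding $\hat\psi$ into~\eqref{e-deffj}; your route is a touch cleaner since it avoids the approximation step. One small precision worth tightening: for $q>2$ the left-most integral in your chain, $\int_{\Lambda_j}\rho\nabla v\cdot\nabla\overline\psi$, and the pairing $\langle -\nabla\cdot\rho\nabla u,\hat\psi\rangle$ are only a priori defined for $\psi\in W^{1,2}_{E_j}(\Lambda_j)$ (equivalently $\hat\psi\in W^{1,2}_{E}(\Lambda)$), not for all of $W^{1,q'}_{E_j}(\Lambda_j)$, which is strictly larger when $q'<2$. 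This costs nothing: testing over $W^{1,2}_{E_j}(\Lambda_j)$ (or its dense subspace $C^\infty_{E_j}(\Lambda_j)$) already pins down the equation in $W^{-1,2}_{E_j}(\Lambda_j)$, and the membership $f_j\in W^{-1,q}_{E_j}(\Lambda_j)$ is immediate from~\eqref{e-deffj} itself, so the conclusion stands.
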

\begin{proof}
i) is obvious. ii) The assertion clearly holds for $f \in L^2(\Lambda )$ and extends by density and continuity of 
all involved operations to all $f \in W^{-1,2}_{E }(\Lambda )$.
\end{proof}

\subsection{Auxiliary results}
In this section we are going to establish several results needed later on for the 
proof of Theorem \ref{t-main}. Since we have reason \emph{not} to work with the model constellations
established in Proposition \ref{p-01}, our first aim is to deduce from these regularity results
for slightly modified model problems which are  adequate  for later purposes.\\
In order to make the reading easier, we produced several graphics which show 
the geometry under consideration including the corresponding boundary parts. 
Note that they only show the left halves of the model constellations  \eqref{e-2} (with $\Sigma = \Sigma_1$) and
\eqref{e-019}, in order to make the `buried' part of the Dirichlet boundary visible. 

They are to be read as follows: 
Coordinate axes in $\R^3$ are chosen such that the $x$-axis points to the right, the $y$-axis backwards and the $z$-axis upwards. White surfaces always carry a Dirichlet boundary condition. The black surfaces stand for 
points from $D^\parallel$, and, consequently, also represents a Dirichlet surface. Also the crosshatched                                 part is Dirichlet --
resulting from antisymmetric reflection of the problem (see \eqref{e-gleich1} below).
The grey part denotes the Neumann part; that is also true for the dotted one.
The latter results from symmetric reflection of the original (model) problem, see \eqref{e-gleich2} below.

\begin{lemma} \label{c-modifymodel}
\renewcommand{\labelenumi}{\roman{enumi})}
\begin{enumerate}
\item
Define $M_- :=({]-1,0[} \times {]0,1[} \times \{0\}) \cup (\{0\} \times {]0,1[} \,\times {]-1,0]})$, 
$M_+=\iota M_-$,
and $E_\pm := \partial \mathfrak Q_\pm \setminus M_\pm$.
Let $\rho_\pm$ be a \emph{constant} coefficient function on $\mathfrak Q_\pm$.\\
Then there is a $p >3$, such that the mapping
\begin{equation} \label{e-iso5}
-\nabla \cdot \rho_\pm \nabla :W^{1,q}_{E_\pm}(\mathfrak Q_\pm) \to W^{-1,q}_{E_\pm}(\mathfrak Q_\pm)
\end{equation}
is a topological isomorphism for all $q \in [2,p[$.

\begin{figure}[h]
    \centerline{\includegraphics[scale=0.9]{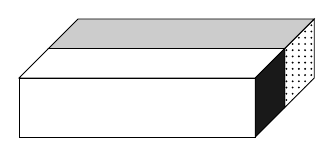}}
     \caption{$\mathfrak Q_-$ for \ref{c-modifymodel}.i). The grey and the dotted part form $M_-$ with Neumann
b.c.,  the rest has Dirichlet b.c..}
  \end{figure}

\item
Define $M= \{0\} \times ]0,1[ \times ]-1,1[$, and $E_- =\partial \mathfrak C_- \setminus M$. 
Let $\rho$ be a coefficient function on $\mathfrak C_-$ which is constant on the two subsets 
$\mathfrak Q_- $, and $\mathfrak C_- \setminus \mathfrak Q_-$, respectively. Then there is
a number $p>3$, such that 
\[
-\nabla \cdot \rho \nabla :W^{1,q}_{E_-}(\mathfrak C_-) \to W^{-1,q}_{E_-}(\mathfrak C_-)
\]
is a topological isomorphism for all $q \in [2,p[$.

\begin{figure}[h]
    \centerline{\includegraphics[scale=0.9]{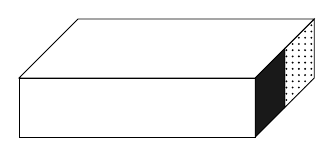}}
     \caption{$\mathfrak Q_-$ for \ref{c-modifymodel}.ii). The dotted part is  $\{0\} \times {]0,1[ }\times {]-1,0[}$ with Neumann b.c., the rest has Dirichlet b.c..}
  \end{figure}

\item {\rm ii)} remains true when `$-$' is  everywhere replaced by `$+$'.
\end{enumerate}
\end{lemma}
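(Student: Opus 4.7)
My plan for all three parts is to apply Proposition~\ref{p-transform} to reduce them, via bi-Lipschitz transformations and symmetric reflections, to Proposition~\ref{p-01} (half-face Neumann case) or Corollary~\ref{e-corspiegel}. Part iii) follows at once from ii): the involution $\iota$ is a $C^1$-diffeomorphism $\mathfrak{C}_+ \to \mathfrak{C}_-$ which exchanges all the $\pm$-labelled data, so Proposition~\ref{p-transform} transfers the isomorphism from ii) verbatim.

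For part ii) I will view $\mathfrak{C}_-$ as a prism extruded in $y_3 \in {]-1,1[}$ over the rectangular base $R = {]-1,0[} \times {]-1,1[}$ in the $(y_1,y_2)$-plane. Under this viewpoint, $M$ is the extrusion of the half-edge $\{0\} \times {]0,1[}$ of $R$, while the coefficient jumps across the horizontal plane $\{y_3 = 0\}$, which sits at positive distance from the top and bottom faces of the prism. The key construction is a bi-Lipschitz map $\phi: \mathfrak{C}_- \to \Lambda$ of product form $\phi(y_1, y_2, y_3) = (\psi(y_1, y_2), y_3)$, with $\psi: R \to \blacktriangle$ a bi-Lipschitz map onto the triangle $\blacktriangle$ having vertices $P_1 = (0,1), P_2 = (0,-1), P_3 = (-1,0)$, chosen to act as the identity on $\{0\} \times [-1,1]$. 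Then $P = (0,0)$ is the midpoint of $\overline{P_1 P_2}$, the Neumann edge $\{0\} \times [0,1]$ maps onto $\overline{P_1 P}$, and $\Lambda = \blacktriangle \times {]-1,1[}$ becomes the triangular prism for which Proposition~\ref{p-01} (half-face Neumann case) is applicable with $\mathcal{H} = \{y_3 = 0\}$.

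Part i) I will handle by symmetric reflection across $\{y_1 = 0\}$: extending the constant $\rho_-$ by the recipe of Remark~\ref{r-fortlassen} to a $\check\rho$ on $\mathfrak{Q}$ piecewise constant across $\{y_1 = 0\}$, and extending $u \in W^{1,q}_{E_-}(\mathfrak{Q}_-)$ symmetrically to $\mathfrak{Q}$. The face of $M_-$ lying on $\{y_1 = 0\}$ then disappears into the interior (Neumann automatic by the symmetry), while the face of $M_-$ on the top extends to the half-top-face $]-1,1[ \times ]0,1[ \times \{0\}$ of $\mathfrak{Q}$. The resulting problem on $\mathfrak{Q}$ is structurally that of ii) -- a cuboid with half-face Neumann for $y_2 > 0$ and piecewise constant coefficient across a perpendicular plane -- only with the coefficient jump plane now $\{y_1 = 0\}$ instead of $\{y_3 = 0\}$ and with permuted axes, so the bi-Lipschitz reduction from ii) applies.

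The hard part will be constructing $\psi: R \to \blacktriangle$ in ii) in a way compatible with Proposition~\ref{p-01}. Since a rectangle is not affinely equivalent to a triangle, $\psi$ cannot be globally affine, and any non-affine $\psi$ produces a transformed coefficient $\omega$ with position-dependent $(y_1,y_2)$-behaviour, in tension with Proposition~\ref{p-01}'s requirement that the coefficient be constant on each component of $\Lambda \setminus \mathcal{H}$. I expect to resolve this by choosing $\psi$ piecewise affine (splitting $R$ along a diagonal into two triangles) in such a way that the additional jump surfaces of $\omega$ are vertical planes parallel to the $y_3$-axis in $\Lambda$, so that Proposition~\ref{p-01} can be applied separately on each subprism cut out by these planes and the resulting isomorphisms glued together along the Dirichlet interfaces.
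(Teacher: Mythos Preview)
Your treatment of iii) matches the paper's: apply Proposition~\ref{p-transform} with $\phi=\iota$.

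Your reflection idea for i) is different from the paper and in fact neater. The paper proves i) directly by Gr\"oger localisation: for each $\x\in\partial\mathfrak Q_-\setminus\{0\}$ a small convex neighbourhood puts the local problem into the scope of Proposition~\ref{p-graphdomain} or Proposition~\ref{p-01} after a Euclidean motion, while for the origin one constructs an explicit piecewise-linear bi-Lipschitz map $\mathfrak l$ that unbends the L-shaped Neumann surface $M_-$ into a single flat half-face, again landing in Proposition~\ref{p-01}; then Theorem~\ref{t-groeger_local_global} gives the isomorphism at $q=3$ and Proposition~\ref{e-interpolatt} opens the interval. Your symmetric reflection across $\{y_1=0\}$ achieves the same unbending for free and reduces i) to (a rotated copy of) ii), so once ii) is in hand your route to i) is shorter.

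The gap is in ii). A piecewise-affine $\psi:R\to\blacktriangle$ necessarily introduces a jump of $\mathcal D\phi$ across the image of the diagonal, so the transformed coefficient $\omega$ jumps across \emph{two} transversal planes in the prism $\Lambda$: the horizontal $\{y_3=0\}$ inherited from $\rho$, and a vertical one coming from the fold of $\psi$. Proposition~\ref{p-01} (and likewise Proposition~\ref{p-convexhetero}) admits only a \emph{single} jump plane $\mathcal H$, so neither applies near the interior line where the two jump planes meet. Your proposed remedy of solving on each sub-prism separately and ``gluing along the Dirichlet interfaces'' does not work: the fold plane is \emph{interior} to $\Lambda$ and carries no boundary condition at all, so imposing Dirichlet data there changes the problem, while imposing nothing gives two problems that do not recombine into the original one. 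In short, the global transformation makes the coefficient structure strictly worse.

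The paper avoids this by not transforming globally. It proves ii) exactly as it proves i): cover $\overline{\mathfrak C_-}$ by small convex neighbourhoods and observe that for \emph{every} point the local constellation already matches Proposition~\ref{p-graphdomain} or Proposition~\ref{p-01} after a rigid motion (the critical point being the origin, where the half-face Neumann geometry of Proposition~\ref{p-01} is met head-on with $\mathcal H=\{y_3=0\}$ as the single jump plane). Gr\"oger's principle (Theorem~\ref{t-groeger_local_global}) then yields the isomorphism at $q=3$, and Proposition~\ref{e-interpolatt} gives the open interval $[2,p[$. If you keep your reflection argument for i), you can still finish by proving ii) this way.
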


\begin{proof}
We prove i), restricting to the `minus'-case.
What we show first is the following:\\
$\clubsuit$ 
{\it There are open, convex sets $ W_0, W_1,\ldots, W_n$ which 
form an open covering of $\overline{\mathfrak Q_-}$ and have the following additional properties:\\
{\rm a)} each set $ W_j \cap \mathfrak Q_-$ is a Lipschitz domain \\
{\rm b)} Letting $\mathfrak Q_j:= W_j \cap \mathfrak Q_-$, $M_j:=M_- \cap W_j$
 and $E_j:=\partial \mathfrak Q_j \setminus M_j$, the operator
\[
-\nabla \cdot \rho \nabla: W^{1,3}_{E_j}(\mathfrak Q_j) \to  W^{1,3}_{E_j}(\mathfrak Q_j)
\]
is a topological isomorphism}. 

In detail, for every point $\x \in \partial \mathfrak Q_- \setminus \{0 \}$
one can find a convex set $U_\x$ such that $U_\x \cap \mathfrak Q_-$ results from a set $\Lambda $
in Proposition \ref{p-graphdomain} or Proposition \ref{p-01} by a Euclidean movement including 
the corresponding boundary parts. So here Proposition \ref{p-transform}  applies.
Thus, it remains to construct a neighbourhood $ W_0$ of $0$ which also satisfies a) and b).\\
Consider $\mathrm z_\bullet :=(-1,0, -\frac {1}{4})$, and take
 the plane $ Q$ which contains  $\mathrm z_\bullet $ and the $y$-axis. We define a bi-Lipschitzian transformation
 $\mathfrak l: \R^3 \to \R^3$ as follows: $\mathfrak l$ leaves the points which lie on $ Q$ or below $ Q$ invariant. 
On the complementary half space 
$\mathfrak l$ acts as the linear mapping $\mathfrak l_+$ which is determined as follows:
$\mathfrak l_+$ acts as the identity on $ Q$ and maps the vector $(-1,0,0)$ onto
$(0,0,1)$. \\
Define $\blacklozenge \subset \R^3$ as the open square with vertices 
\[
(-1,0,0), \quad (0,1,0), \quad (1,0,0), \quad (0,-1,0)
\]
and $\blacktriangleleft= \blacklozenge \cap \mathfrak H^-_1$ as the open left  half of this. Further, we put 
$\mathfrak P_\blacklozenge:=\blacklozenge \times ]-1,1[$ and 
$\mathfrak P_\blacktriangleleft:=\blacktriangleleft \times ]-1,1[$. 
Then, for sufficiently small $\alpha >0$, one has
\[
\alpha \mathfrak P_\blacklozenge \cap \mathfrak l (\mathfrak Q_-) = \alpha 
\mathfrak P_\blacktriangleleft , \quad \text{and}
\quad \alpha \mathfrak P_\blacklozenge \cap \mathfrak l (M_-) = \{0\} \times ]0,\alpha[ \times
]-\alpha, \alpha[.
\]
Letting $ W_0 :=\mathfrak l ^{-1} (\alpha \mathfrak P_\blacklozenge)$,  one obtains
for sufficiently small $\alpha$
\[
\mathfrak l(  W_0 \cap \mathfrak Q_-) = \alpha \mathfrak P_\blacktriangleleft,
 \quad \text{and} \quad
\mathfrak l( W_0 \cap M_-) =\{0\} \times ]0,\alpha[ \times ]-\alpha, \alpha[. 
\]
Moreover, it is clear that $ Q$  neither intersects the ground plate nor the cover plate
of $\alpha \mathfrak P_\blacktriangleleft$, and that the transformed coefficient function on 
$\alpha \mathfrak P_\blacktriangleleft$ is constant on both components of 
$\alpha \mathfrak P_\blacktriangleleft \setminus  Q$. Thus, one is, 
concerning $\Lambda:=\alpha \mathfrak P_\blacktriangleleft$
 and $M:=\{0\} \times ]0,\alpha[ \times ]-\alpha, \alpha[$ again in the situation of Proposition
 \ref{p-01}, and, hence, an application of Proposition \ref{p-transform} shows that 
$ W_0$ has the required properties. This proves $\clubsuit$.\\
Then Proposition \ref{t-groeger_local_global} implies that \eqref{e-iso5} is an isomorphism for $q=3$. 
Moreover, $\mathfrak Q_-$ is a Lipschitz domain, and $E_-$ evidently is a $2$-set, see \eqref{e-twoset}.
 Hence the set of numbers $q$, for which \eqref{e-iso5} is a
 topological isomorphism, is an \emph{open interval} by Proposition \ref{e-interpolatt} that contains $2$ and $3$. \\
ii) is proved along the same lines; this time all points from $\overline {\mathfrak C_\pm}$
are even covered by the model sets in Proposition \ref{p-graphdomain} and Proposition \ref{p-01}.
iii) is obtained from ii) by means of Proposition \ref{p-transform}, there taking $\phi$ as $\iota$.
\end{proof}

\begin{corollary} \label{c-Neumannkant}
Assume that $\rho_\pm$ are \emph{constant} coefficient functions  on $\mathfrak Q_\pm$, respectively. Put
\begin{equation} \label{e-M7}
M_- :={]-1,0[} \times {]0,1[} \times \{0\} 
\end{equation}

\begin{figure}[h]
    \centerline{\includegraphics[scale=0.9]{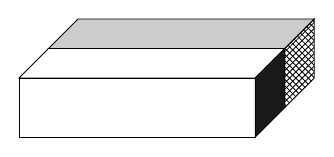}}
     \caption{$\mathfrak Q_- $ with the grey Neumann surface $M_-$; the remaining surfaces carry Dirichlet b.c..}
  \end{figure}

or
\begin{equation} \label{e-M8}
M_- :=]-1,0[ \times ]-1,1[ \times \{0\} \cup \{0\} \times ]0,1[ \times ]-1,0].
\end{equation}

\begin{figure}[h]
    \centerline{\includegraphics[scale=0.9]{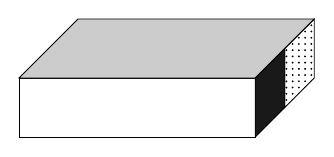}}
     \caption{$\mathfrak Q_-$ with the Neumann surface $M_-$ of \eqref{e-M8}, consisting of the grey and the dotted part. The remaining surfaces carry Dirichlet b.c.. }
  \end{figure}

$M_+=\iota M_-$ and $E_\pm = \partial \mathfrak Q_\pm \setminus M_\pm$. Then the same conclusion 
as in  Lemma \ref{c-modifymodel} i) holds.
\end{corollary}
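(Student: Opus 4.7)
The plan is to reduce both sub-cases of Corollary \ref{c-Neumannkant} to the situation of Lemma \ref{c-modifymodel}.ii) by means of a reflection that doubles $\mathfrak Q_-$ to $\mathfrak C_-$ and converts the Neumann configuration into the one already treated there. The `$+$'-case then follows from the `$-$'-case by Proposition \ref{p-transform} applied to the involution $\iota$, exactly as in Lemma \ref{c-modifymodel}.iii).

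For case \eqref{e-M8}, the entire top face $\mathfrak Q_-\cap\{y_3=0\}$ is contained in $M_-$, so I apply \emph{even} reflection across $\{y_3=0\}$. The constant coefficient $\rho_-$ is extended to the reflected upper half by $\sigma\rho_-\sigma$, where $\sigma$ is the reflection $\y=(y_1,y_2,y_3)\mapsto(y_1,y_2,-y_3)$; by Proposition \ref{p-transform} this is precisely the coefficient that makes the reflection compatible with the divergence-form operator. The extended coefficient on $\mathfrak C_-$ is piecewise constant with a single jump across $\{y_3=0\}$, so it fits the hypothesis of Lemma \ref{c-modifymodel}.ii). The second piece of $M_-$, namely $\{0\}\times\,]0,1[\,\times\,]-1,0]$, reflects upward to $\{0\}\times\,]0,1[\,\times\,[0,1[$; their union is the upper half $\{0\}\times\,]0,1[\,\times\,]-1,1[$ of the right face of $\mathfrak C_-$, exactly the Neumann set $M$ of Lemma \ref{c-modifymodel}.ii). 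Case \eqref{e-M7} is handled symmetrically by \emph{odd} reflection across the all-Dirichlet face $\{y_1=0\}$ (i.e., by the involution $\iota$), which doubles $\mathfrak Q_-$ to $\mathfrak Q$, yields a piecewise-constant coefficient with jump across $\{y_1=0\}$, and turns the Neumann quadrant $]-1,0[\,\times\,]0,1[\,\times\,\{0\}$ into the upper half $]-1,1[\,\times\,]0,1[\,\times\,\{0\}$ of the top face of $\mathfrak Q$. Applying the $C^1$-diffeomorphism $(y_1,y_2,y_3)\mapsto(y_3,y_2,y_1)$, which sends $\mathfrak Q$ onto $\mathfrak C_-$ and the top face onto the right face, I arrive again at the setting of Lemma \ref{c-modifymodel}.ii): the Neumann set becomes $\{0\}\times\,]0,1[\,\times\,]-1,1[$, and the coefficient jump occurs across $\{y_3=0\}$.

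In both situations, extension by the chosen parity (even for \eqref{e-M8}, odd for \eqref{e-M7}) identifies $W^{1,q}_{E_-}(\mathfrak Q_-)$ with the closed subspace of the corresponding $W^{1,q}$-space on $\mathfrak C_-$ consisting of functions having that parity under the reflection, and a dual extension realizes the same for the sources. Since, by construction, the extended coefficient satisfies the reflection symmetry $\tilde\rho(\sigma\y)=\sigma\tilde\rho(\y)\sigma$, uniqueness of the BVP on $\mathfrak C_-$ forces the solution associated to a source of given parity to itself have that parity; the isomorphism of Lemma \ref{c-modifymodel}.ii) therefore restricts to an isomorphism on this parity subspace, equivalent via the extension operator to the desired isomorphism on $\mathfrak Q_-$ for the full range $q\in[2,p[$ with some $p>3$. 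The one place that needs genuine care is the bookkeeping for this reflection correspondence — in particular, the continuity of the dual extension $W^{-1,q}_{E_-}(\mathfrak Q_-)\to W^{-1,q}_{\tilde E}(\mathfrak C_-)$ and the verification that the image Dirichlet set $\tilde E$ retains the $2$-set property that allows Lemma \ref{c-modifymodel}.ii) to be invoked — but each of these steps is routine given Proposition \ref{p-transform} and standard facts on even/odd extension.
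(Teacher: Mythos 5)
Your proof is correct, and for case \eqref{e-M8} it coincides with the paper's argument: even reflection across $\{y_3=0\}$ maps $\mathfrak Q_-$ to $\mathfrak C_-$, merges the side patch $\{0\}\times{]0,1[}\times{]-1,0]}$ with its mirror image to the full Neumann face $M=\{0\}\times{]0,1[}\times{]-1,1[}$ of Lemma \ref{c-modifymodel}.ii), and produces a coefficient that is constant on each of $\mathfrak Q_-$ and $\mathfrak C_-\setminus\overline{\mathfrak Q_-}$; the descent to $\mathfrak Q_-$ via the even-parity subspace is exactly the point of the reference to \cite[Proposition 4.13]{mitRobert}. For case \eqref{e-M7}, however, the paper takes a shorter route than yours: rather than odd-reflecting $\mathfrak Q_-$ across $\{y_1=0\}$ to $\mathfrak Q$, then mapping to $\mathfrak C_-$, and finally restricting to the odd-parity subspace, the paper simply observes that an \emph{affine} map such as $(y_1,y_2,y_3)\mapsto(y_3,y_2,2y_1+1)$ already carries $\mathfrak Q_-$ bijectively onto $\mathfrak C_-$, sends the Neumann patch ${]-1,0[}\times{]0,1[}\times\{0\}$ onto $M=\{0\}\times{]0,1[}\times{]-1,1[}$, and leaves the constant coefficient constant, so Proposition \ref{p-transform} transfers the isomorphism of Lemma \ref{c-modifymodel}.ii) directly with no doubling, no parity subspace, and none of the dual-extension bookkeeping you flagged as requiring care. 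Both routes are valid; the affine one is preferable for \eqref{e-M7} because it recognizes that $\mathfrak Q_-$ is itself affinely equivalent to the model domain $\mathfrak C_-$, making reflection unnecessary there. One small clarification: the $2$-set property you list as needing verification is not a separate hypothesis of Lemma \ref{c-modifymodel}.ii) — that lemma is stated for a fixed model geometry, and your reduction lands exactly in it, so nothing additional needs to be checked on that front.
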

\begin{proof}
In case of \eqref{e-M7} the problem is, modulo an affine mapping, the same as in Lemma 
\ref{c-modifymodel} ii) with the roles of the grey and the dotted part exchanged. 

In case of \eqref{e-M8} one reflects (compare \cite[Proposition 4.13]{mitRobert}) the problem across
the $x-y$-plane and again ends up with a problem as in Lemma \ref{c-modifymodel} ii).
\end{proof}


Now we will establish the required auxiliary results for points in $D_c^\parallel$: In  Assumption
\ref{d-rigidincl} ii) it is only supposed that the corresponding limit exists -- whatever this limit is. In the 
sequel we will modify the  $C^1$ diffeomorphism $\phi_\x:U_\x\to \frak C$ in \ref{d-rigidincl} i) in such a manner  that the resulting limit 
commutes with the linear mapping $\iota$.

\begin{lemma} \label{l-rotat}
Let $\mathfrak a$ be a symmetric, positive definite $3\times 3$ matrix. 
\renewcommand{\labelenumi}{\roman{enumi})}
Then there is linear bijection  $\mathfrak b:\R^3 \to \R^3$, mapping the $y-z$-plane
onto itself, such that the matrix $\frac {1}{|\det \mathfrak b|} \mathfrak b \mathfrak a \mathfrak b^T$
 is the identity. 
\end{lemma}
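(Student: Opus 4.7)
The approach is to exploit the block structure imposed on $\mathfrak b$ by the plane-preservation constraint and thereby reduce the problem to a $2\times 2$ problem, which can then be solved by a positive square-root construction.

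A linear bijection $\mathfrak b$ maps the $y$-$z$-plane into (and hence, by dimension, onto) itself exactly when its matrix has the block form
$$
\mathfrak b = \begin{pmatrix} \alpha & 0 \\ v & B \end{pmatrix}, \qquad \alpha \in \R \setminus \{0\},\; v \in \R^2,\; B \in \R^{2\times 2} \text{ invertible}.
$$
Decomposing $\mathfrak a$ compatibly as
$$\mathfrak a = \begin{pmatrix} a & c^T \\ c & A \end{pmatrix},$$
with $a > 0$ and $A$ symmetric positive definite (by positivity of $\mathfrak a$), a direct block computation shows that the equation $\mathfrak b \mathfrak a \mathfrak b^T = \lambda I$ with $\lambda := |\det \mathfrak b|$ is equivalent to the three conditions
$$
\alpha^2 a = \lambda, \qquad a v + Bc = 0, \qquad a v v^T + B c v^T + v c^T B^T + B A B^T = \lambda I.
$$
The middle condition forces $v = -Bc/a$. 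Substituting this back into the third condition, the $v$-dependent terms telescope to $-a^{-1} B c c^T B^T$, so the third condition reduces to $B S B^T = \lambda I$, where
$$S := A - \tfrac{1}{a}\, c c^T$$
is the Schur complement of the $(1,1)$-entry of $\mathfrak a$. Because $\mathfrak a$ is positive definite, so is $S$, and $S$ therefore admits a positive definite inverse square root $S^{-1/2}$.

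To finish, I would set $\alpha := \sqrt{\det S}$, $B := \sqrt{a\, \det S}\; S^{-1/2}$, and $v := -Bc/a$. Then $B S B^T = a\, \det S \cdot I = \alpha^2 a \cdot I$, matching the reduced third condition, and a short determinant check gives $|\det \mathfrak b| = |\alpha|\,|\det B| = \sqrt{\det S} \cdot a \sqrt{\det S} = \alpha^2 a$, which simultaneously matches the first condition and makes $\lambda = \alpha^2 a$ self-consistent. Since $\alpha \neq 0$ and $B$ is a positive multiple of the invertible matrix $S^{-1/2}$, the resulting $\mathfrak b$ is a linear bijection of $\R^3$ preserving the $y$-$z$-plane, with $\mathfrak b \mathfrak a \mathfrak b^T = |\det \mathfrak b|\, I$ as desired.

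The only mildly delicate point is the self-consistency of the scalar $\lambda = |\det \mathfrak b|$, which appears both as the right-hand-side normalization and as a quantity determined by the entries of $\mathfrak b$; it is precisely the relation $\alpha^2 = \det S$ that reconciles the two, once $B$ has been fixed proportional to $S^{-1/2}$. Apart from this bookkeeping, the construction is routine linear algebra.
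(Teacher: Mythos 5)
Your proof is correct, but it takes a genuinely different route from the paper's. The paper first diagonalizes $\mathfrak a$ with an orthogonal $\mathfrak o$, then observes that the diagonal matrix $\mathfrak s = \mathrm{diag}(\sqrt{bc},\sqrt{ac},\sqrt{ab})$ simultaneously satisfies $\mathfrak s\,\mathrm{diag}(a,b,c)\,\mathfrak s^T = abc\,\mathrm{Id}$ and $\det\mathfrak s = abc = \det\mathfrak a$, and finally composes with a second orthogonal map $\mathfrak v$ to pull the image of $\mathrm{span}\{\mathrm e_2,\mathrm e_3\}$ under $\mathfrak s\mathfrak o$ back onto the $y$-$z$-plane; the product $\mathfrak b = \mathfrak v\mathfrak s\mathfrak o$ then works because the two orthogonal factors do not affect the determinant modulus. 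You instead impose the plane-preservation constraint from the start, which forces a lower block-triangular form for $\mathfrak b$, decompose $\mathfrak a$ compatibly, and reduce the $3\times 3$ problem via the Schur complement $S = A - a^{-1}cc^T$ to the $2\times 2$ equation $BSB^T = \lambda I$, which is solved by $B \propto S^{-1/2}$; the normalization $\alpha = \sqrt{\det S}$ then makes $\lambda = |\det\mathfrak b|$ self-consistent. Both arguments are sound. The paper's is shorter to state and conceptually transparent (two orthogonal factors sandwiching a diagonal scaling), but relies on the small trick of choosing $\mathfrak s$ so that its determinant equals $\det\mathfrak a$. Yours is more constructive and systematic: it produces an explicit closed-form $\mathfrak b$ in terms of the blocks of $\mathfrak a$, and it makes the interaction between the normalization $|\det\mathfrak b|$ and the entries of $\mathfrak b$ completely explicit rather than having it fall out of the orthogonality of the outer factors; the price is a slightly longer block computation. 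Both generalize readily to higher dimensions with a distinguished coordinate hyperplane.
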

\begin{proof}
The assumption implies the existence of an orthogonal matrix $\mathfrak o$ such that 
$\mathfrak o\mathfrak a\mathfrak o^T=  \mathfrak {diag}(a,b,c)$ with  $a,b,c>0$. Next note that for $\mathfrak s =  \mathfrak {diag}(\sqrt{bc} , \sqrt{ac} , \sqrt{ab})$
\begin{equation} \label{e-rotatecompress}
(abc)^{-1} \mathfrak s \mathfrak o\mathfrak a\mathfrak o^T\mathfrak s^T = \text{\rm Id}.
\end{equation}
Moreover,  $\det \, \mathfrak s = \det \, \mathfrak a = abc$.
Let $H$ be the image of $\text{\rm span} \{\mathrm e_2,\mathrm e_3\}$ under 
$ \mathfrak s \mathfrak o$. 
Choose an orthonormal basis $\{\mathrm h_2,\mathrm h_3\}$ of $H$.
Let $\mathfrak v$ be an orthogonal map in $\R^3$, taking $\mathrm h_j$ to $\mathrm e_j$, $j=2,3$. 
$\mathfrak b =\mathfrak v\mathfrak s\mathfrak o$ maps 
 $\text{\rm span} \{\mathrm  e_2,\mathrm  e_3\}$ onto itself and 
\eqref{e-rotatecompress} implies
$$ \frac {1}{|\det \mathfrak b|} \mathfrak b \mathfrak a \mathfrak b^T = (abc)^{-1} \mathfrak{vsoa}
\mathfrak o^T  \mathfrak s^T\mathfrak v^T =  (abc)^{-1} \, \mathfrak{v}\, {\rm Id}\, \mathfrak v^T
= {\rm Id},$$ 
since $\mathfrak v^T=\mathfrak v^{-1}$.  
\end{proof}

Having this at hand, the next lemma allows to reproduce the geometric constellation in Assumption \ref{d-rigidincl}
in case $\Sigma = \Sigma _1$ \emph{and} the additional property that the limit of the resulting coefficient
function towards $0$ is a scalar multiple of ${\rm Id}_{\R^3}$. Moreover, the cases $\Sigma = \Sigma_2$ and 
$\Sigma = \Sigma_3$ are reduced in some sense to $\Sigma = \Sigma_1$.
Hereby, the resulting coefficient function has limits in $\mathfrak Q$ and 
$\mathfrak C \setminus \overline {\mathfrak Q}$ for $\mathrm z \to 0$ which are of a particularly simple form: 
They commute with $\iota$.

\begin{lemma} \label{c-transform2}
\renewcommand{\labelenumi}{\roman{enumi})}
\begin{enumerate}
\item
Under Assumption \ref{d-rigidincl}, one may find, for $\x \in D_c^\parallel$,  a neighborhood 
$\widetilde U =\widetilde { U_\x}$,
and a $C^1$-diffeomorphism  $\widetilde \phi_\x$ with
\begin{equation} \label{e-001} 
\widetilde \phi_\x(\Omega \cap \widetilde U)= \mathfrak  C \setminus \Sigma_1, \quad
\widetilde \phi_\x( \widetilde U \cap D^\parallel)=\widetilde  \phi_\x( \widetilde U \cap D_c^\parallel)=
\Sigma_1 \cap  \mathfrak  C  , \quad \widetilde  \phi_\x(\x)=0.
\end{equation} 
where $\Sigma_1$ is the set defined in Assumption \ref{d-rigidincl}.
\item
Let $\widetilde \mu$ be the coefficient function obtained from $\mu$ under the transformation
$\widetilde {\phi_\x}$ (cf. Proposition \ref{p-transform}).
In case  $\Sigma = \Sigma_1$ $\widetilde \mu$ has a limit for $z \to 0$ in $ \mathfrak C \setminus \Sigma_1$
which equals a scalar multiple of the identity matrix.\\
In case of $\Sigma = \Sigma_2$ or $\Sigma = \Sigma_3$ the limits 
\[
\lim_{\mathrm z \in  \mathfrak Q \setminus \Sigma_1, \;  \mathrm z \to 0} \widetilde \mu(\mathrm z) \quad 
\text{and} \quad 
\lim_{\mathrm z \in  \mathfrak C \setminus \overline {\mathfrak Q} , \; \mathrm z \to 0} 
\widetilde \mu(\mathrm z)
\]
exist and are of the form 
\begin{equation} \label{e-matrixforum}
\left(
\begin{array}{ccc}
\beta  & 0 & 0\\
0 & a^\pm_{2,2} & a^\pm_{2,3}\\
0& a^\pm_{2,3} & a^\pm_{3,3}
\end{array}
\right), \quad \beta >0.
\end{equation}
Hence they commute with the matrix $\iota$ defined in Section \ref{ss-prelim}.
\end{enumerate}
\end{lemma}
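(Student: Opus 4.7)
The overall plan is to start from the diffeomorphism $\phi_\x$ furnished by Assumption \ref{d-rigidincl} and to adjust it by post-composition with suitable linear corrections so that the coefficient function transported via Proposition \ref{p-transform} acquires the desired limit form at $0$. The central tool is Lemma \ref{l-rotat}: applied to a given symmetric positive definite matrix $\mathfrak a$, it produces a linear bijection preserving the $(y_2,y_3)$-plane that turns $\mathfrak a$ into the identity under the conjugation law \eqref{e-transformat1}.

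Consider first the case $\Sigma = \Sigma_1$. Setting $A = \mathcal D\phi_\x(\x)$ and using Proposition \ref{p-transform} together with the continuity of $\mathcal D\phi_\x$, the coefficient function $\bar\mu$ obtained from $\mu$ via $\phi_\x$ satisfies $\lim_{\y\to 0}\bar\mu(\y) = \bar{\mathfrak a} := A\mu_\x A^T/|\det A|$, which is symmetric and positive definite. Lemma \ref{l-rotat} supplies a linear bijection $\mathfrak b\colon \R^3\to\R^3$ preserving $\{y_1 = 0\}$ with $\mathfrak b\bar{\mathfrak a}\mathfrak b^T/|\det \mathfrak b| = \mathrm{Id}$. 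Because $\mathfrak b$ maps $\{y_1=0\}$ into itself, $\mathfrak b(\Sigma_1)$ remains in that plane and near $0$ is a half-plane bounded by a line through the origin. A planar rotation $R$ acting on $\{y_1=0\}$ and trivially on the $y_1$-direction can then be chosen so that $R\mathfrak b(\Sigma_1)$ coincides near $0$ with $\Sigma_1$; being orthogonal, $R$ preserves the identity property. Finally, a scalar dilation $\y\mapsto \alpha\y$ rescales a sufficiently small cube contained in $R\mathfrak b(\mathfrak C)$ onto $\mathfrak C$, and the transformation formula \eqref{e-transformat1} shows that the resulting coefficient limit is $\alpha\cdot\mathrm{Id}$, a scalar multiple of the identity. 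Setting $\widetilde\phi_\x = \alpha\, R\,\mathfrak b\circ\phi_\x$ on an appropriately shrunken neighborhood $\widetilde U \subset U_\x$ produces the $C^1$-diffeomorphism required by \eqref{e-001}, proving (i) and the $\Sigma_1$-case of (ii).

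For the cases $\Sigma = \Sigma_2$ and $\Sigma = \Sigma_3$, the apex of the V-shaped edge of $\Sigma$ prevents a genuinely $C^1$ reshaping into the smoothly edged $\Sigma_1$; $\widetilde\phi_\x$ must be assembled piecewise, as a bi-Lipschitz map that is $C^1$ on each of the two pieces $\phi_\x^{-1}(\mathfrak Q)$ and $\phi_\x^{-1}(\mathfrak C\setminus\overline{\mathfrak Q})$ separately but need not be smooth across the interface. On each half I apply the $\mathfrak b\to R\to \alpha$ procedure of the previous paragraph, choosing the rotation on each side so that the corresponding leg of the V-shape is carried onto the straight edge of $\Sigma_1$. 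Continuous gluing along $\{y_3=0\}$ forces a common dilation factor and hence a common top-left entry $\beta$ in the two limit matrices. The block-diagonal form \eqref{e-matrixforum} then follows, because on each side the correction preserves the $y_1$-direction (as a product of a rotation in $\{y_1=0\}$ and a scalar dilation), so $\mathrm e_1$ is an eigenvector of the limit matrix with eigenvalue $\beta$; commutativity with $\iota$ is then immediate from the resulting block structure.

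The principal obstacle lies in the piecewise construction for $\Sigma = \Sigma_2, \Sigma_3$: one must simultaneously arrange that (a) the two linear pieces glue into a bi-Lipschitz map with image $\mathfrak C\setminus\Sigma_1$, (b) each piece straightens the corresponding leg of $\Sigma$ into the corresponding portion of $\Sigma_1$'s edge, and (c) the two limit matrices share the diagonal entry $\beta$ while retaining the block-diagonal form \eqref{e-matrixforum}. Reconciling these three demands within the degrees of freedom allowed by Lemma \ref{l-rotat} is the technical heart of the argument.
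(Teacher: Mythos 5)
For $\Sigma=\Sigma_1$ your argument tracks the paper's almost verbatim: apply Lemma~\ref{l-rotat}, correct by a rotation of the $y$--$z$ plane, then dilate. That part is fine.

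For $\Sigma=\Sigma_2,\Sigma_3$ there is a genuine gap, and it is precisely the one you describe as ``the technical heart'' without resolving it. You propose to straighten each leg of the V-shaped edge by applying a \emph{planar rotation} of $\{y_1=0\}$ on each half and then gluing along $\{y_3=0\}$. But a nontrivial rotation of $\{y_1=0\}$ around the $y_1$-axis does not preserve $\{y_3=0\}$; it carries the gluing interface to a tilted plane. So the two pieces obtained from your $R_\pm$ cannot agree on the common boundary, and the resulting map is not even continuous, let alone bi-Lipschitz. No choice of ``common dilation factor'' repairs this, since the mismatch is angular, not metric. The claim that the limit on each side stays a scalar multiple of the identity (because rotations are orthogonal) is therefore also unsupported. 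Note, in fact, that the lemma only asserts the weaker block-diagonal form \eqref{e-matrixforum}, not a scalar matrix; that is a hint that the correct straightening cannot be orthogonal.

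The paper uses a different device: after a \emph{single global} application of $\mathfrak b$ and one global rotation $\mathfrak r$, it straightens the two legs by a piecewise \emph{shear} $\omega$ whose two linear pieces each fix the $x$--$y$ plane pointwise and map $\z_\mp$ to $\mp\mathrm e_3$. Fixing the $x$--$y$ plane pointwise guarantees continuity across the interface $\{z_3=0\}$, which is exactly why shears work where rotations fail. The price is that a shear, unlike a rotation, does not preserve the identity matrix under the conjugation \eqref{e-transformat1}; it only preserves the $\mathrm e_1$-direction, which is what yields the block-diagonal structure \eqref{e-matrixforum} and the commutation with $\iota$, rather than a scalar limit. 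Replacing your per-half rotations by these shears (and keeping $\mathfrak b$ and $\mathfrak r$ global rather than per-half) closes the gap and produces the limit form actually stated in the lemma.
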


\begin{proof}
Let $\underline \mu$ denote the coefficient function which is obtained from $\mu$ when transforming under 
$\phi_\x$, cf. \eqref{e-transformat1}. Since $\phi_\x$ is $C^1$,
\begin{equation} \label{e-transformphi}
\lim_{ \mathrm z \to 0, 
\mathrm z \in \phi_\x( U_\x \cap \Omega)} \underline \mu(\mathrm z)=
\lim_{ \mathrm z \to 0, 
\mathrm z \in \mathfrak C \setminus \Sigma} \underline \mu(\mathrm z)
=:\mu_\star
\end{equation}
 exists. 
 Denote by $\Sigma$ one of the sets $\Sigma_1$, $\Sigma_2$ or $\Sigma_3$. By assumption 
 $\phi_\x(\Omega \cap U_\x)= \mathfrak C \setminus \Sigma $. 
 We then apply the transformation $\mathfrak b$ constructed in
 Lemma \ref{l-rotat} to $\mathfrak a = \mu_\star$. 
 We set 
 \begin{eqnarray}
\label{eq.hatphi}
\widehat \phi_\x := \mathfrak b \phi_\x.
\end{eqnarray}
Denoting the resulting coefficient function by $\check \mu$, we obtain 
\begin{equation} \label{e-transformB}
\lim_{ \mathrm z \to 0, \mathrm z \in \mathfrak Q \setminus \Sigma} \check \mu(\mathrm z)
=  \text{\rm Id}.
\end{equation}

Now let us focus on the individual cases $\Sigma = \Sigma_1, \Sigma_2, \Sigma_3$.
In case $\Sigma = \Sigma_1$ we are nearly finished: 

Since the map $\mathfrak b$ defined in Lemma \ref{l-rotat} preserves the $y-z$-plane, 
we can compose  $\widehat \phi_\x$ in \eqref{eq.hatphi} with a 
rotation $\mathfrak r$ around the $x$-axis such that 
$\mathfrak r \mathfrak b (\mathrm e_3)  \in \{ \lambda e_3: \lambda \neq 0\} $ and  the set 
$\{\y: \y = (0,y_2,y_3) ,  y_2 \le 0 \}$ is mapped under $\mathfrak r \mathfrak b$
onto itself. Evidently, the limit of the coefficient function, resulting under the mapping $\check \phi_\x:= 
\mathfrak r  \widehat \phi_\x $, remains the identity on $\R^3$.

Since $0$ is an inner point of the transformed cube $\mathfrak r  \mathfrak b \mathfrak C$, there is a number $\alpha \in {]0,1[}$, such that the cube $\alpha \mathfrak C $ is contained in $\mathfrak r \mathfrak b \mathfrak C $. 
One now shrinks  
the former neighborhood $U_\x$ to $\widetilde { U_\x}:=(\check  \phi_x)^{-1} \bigl (\alpha \mathfrak C\bigr )$.
The mapping properties of $\mathfrak r \mathfrak b$ guarantee that
the mapping $\widetilde \phi_\x := \frac {1}{\alpha}\check \phi_\x$
 indeed satisfies \eqref{e-001}. Moreover, we obtain the asserted form of the limit for the transformed 
coefficient function.

Let us now treat the cases $\Sigma = \Sigma_2$ and $\Sigma = \Sigma _3$: Evidently, $\mathfrak b(\Sigma_2)$ is a 
triangle $\mathcal T$ in the $y-z$ plane with one vertex in $0 \in \R^3$. 
One then performs a rotation $\mathfrak r$ of the $y-z$ plane around the $x$-axis such that
\[
\mathfrak H_2^- \supset 
\mathfrak r \mathcal T \cap \mathfrak H^+_3 \neq \emptyset  \neq \mathfrak r \mathcal T 
\cap \mathfrak H^-_3 \subset \mathfrak H_2^- . 
\]
Obviously, $\mathfrak r \mathcal T$ remains a subset of the $y-z$-plane.  It is clear that also after this
new transformation the limit of the resulting coefficient function towards $0$ remains the identity matrix. 
Let $ \z_-$ be the unit vector along the side of $\mathfrak r \mathcal T$  which is adjacent to $0 \in \R^3$
and situated in $\mathfrak H_3^-$ and $\z_+$ the unit vector along the other side of
 $\mathfrak r \mathcal T$ which is adjacent to $0 \in \R^3$. In this notation, we define
now a bi-Lipschitzian mapping $\omega:\R^3 \to \R^3$ as follows:
Within the half space $\overline {\mathfrak H^-_3}$ we set $\omega$ as the linear mapping which leaves 
the $x-y$ plane fixed and maps $\z_-$ onto $- \mathrm e_3$. On the half space $\overline { \mathfrak H_3^+}$
 we define $\omega$ by also  leaving
 the $x-y$ plane fixed and mapping $ \z_+$ onto $ \mathrm e_3$. This transforms $\frak r\mathcal T$
 into a triangle of which one side is the segment between the vertices $(0,0,-1)$ and $(0,0,1)$ and the
 third vertex lies in the $x-y$ plane. Now one is -- locally around $0 \in \R^3$ --  in the same situation as in case $\Sigma = \Sigma_1$ as far as the geometry is concerned.
Observe that both transformations, $\frak r$ and $\omega$ in fact only took place in 
the $y-z$-plane and left the $x$-direction invariant. Hence, it is clear that then a limit of the derived 
coefficient function in the half spaces $\mathfrak H^\pm_3$ exists and is of the  form as postulated in 
\eqref{e-matrixforum}.
Shrinking the neighborhood of $\x$ as in the case $\Sigma = \Sigma_1$ and applying the homothety 
$\R^3 \ni \z \mapsto \frac {1}{\alpha} \z$, one has proved the assertion in case $\Sigma = \Sigma_2$.\\
 If $\Sigma= \Sigma _3$, one proceeds the same way  -- beginning with the observation that 
$\mathfrak b\bigl ( (\mathfrak C \cap \{\z : z_1 =0 \}) \setminus \Sigma_3 \bigr)$ also is a triangle in the 
$y-z$-plane. At the end one applies the mapping $\mathfrak {diag}(\frac {1}{\alpha},-\frac {1}{\alpha},\frac {1}{\alpha})$.
\end{proof}

\subsection{The proof}
Now we present the proof of Theorem \ref{t-main}. 
We start by considering the regularity near points in $D^\parallel_d$ and $R_d$, since this is the easier situation. 
Treating points in $D^\parallel_c$ and $R_c$ requires the analysis of one more model situation, see Lemma \ref{l-multiple1} , below.  

\subsubsection{Regularity near points in $D^\parallel_d$ and $R_d$}
We first  localize the problem around any point $\x  \in D^\parallel_d \cup R_d$,
according to Lemma \ref{l-cutoff}. Then Assumption  $\ref{a-00}$ and Assumption  $\ref{a-01}$ allow to apply Lemma \ref{l-031}. This permits the \emph{separate} consideration of the equation of the connected components of $\Omega \cap U_{\x}$.
Again Assumption  \ref{a-00} and Assumption  \ref{a-01} assure that the solutions on the separate sets do admit the required
$W^{1,q}$-regularity with a $q >3$. Finally, thanks to Lemma \ref{l-0x1}, then the function on the whole set $U_\x \cap \Omega$ belongs to $W^{1,q}_0(U_\x  \cap \Omega)$ if $\x \in D^\parallel_d $ and to $W^{1,q}_D(U_\x  \cap \Omega)$ if 
$\x \in R_d$.

\subsubsection{Regularity near points in $D^\parallel_c$ and $R_c$}
The strategy which applies to both of the remaining cases $\x  \in D_c^\parallel$ and
$\x \in R_c$, respectively, is as follows.  
First -- as above -- one localizes the problem around the point $\x$
 under consideration. Here the sets $ \widetilde U_\x$ from Lemma \ref{c-transform2}
and $ { U_\x}$ from Assumption  \ref{d-Rout}, respectively, play the role of $ V$ in Lemma \ref{l-cutoff}. Afterwards one transforms the problem under the mapping $\widetilde \phi_\x $ and $\phi_\x$, respectively,  onto the corresponding model sets, thereby preserving
the quality of the problems, thanks to Proposition \ref{p-transform}. Then one `compares' the resulting problem
 with one for which the coefficient function is piecewise constant in the spirit of Proposition \ref{l-localize}.

Let us start by proving a regularity theorem for the corresponding model sets.
Here $M$ always denotes the Neumann boundary part.
\begin{lemma} \label{l-multiple1}
Let $\Lambda$ be one of the domains  
$\mathfrak C \setminus \Sigma_1, \mathfrak Q \setminus \Sigma_1$, where $\Sigma_1$ is defined in \eqref{e-halbflaeche}.
In case  $\Lambda =\mathfrak C \setminus \Sigma_1$ let $M =\emptyset$.
In case $\Lambda = \mathfrak Q \setminus \Sigma_1$, let either $M=\emptyset$
or 
\begin{equation} \label{e-M}
M =\bigl ({ ]-1,1[} \times {]-1,1[} \times \{0\}\bigr ) \setminus \Sigma_1
, \quad \text {or} \quad M = {]-1,1[} \times{ ]0,1[} \times \{0\}.
\end{equation}
In any case we set $E:=\partial \Lambda \setminus M$.

Let $\varrho$ be an elliptic coefficient function on $\Lambda$, which satisfies 
the invariance property $ \varrho(\iota(\cdot )) =\iota \varrho(\cdot) \iota$.
Moreover, if $\Lambda = \mathfrak Q \setminus \Sigma_1$, then let $\rho$ be constant on
$\mathfrak Q_+$, and if $\Lambda = \mathfrak C \setminus \Sigma_1$, then  let
$\rho$ be constant on the sets $\mathfrak C_+ \cap \mathfrak H_3^\pm$.
 \\
Then, in any case, there is a $p>3$ such that 
$-\nabla \cdot \varrho \nabla:W^{1,q}_E(\Lambda) \to W_E^{-1,q}(\Lambda)$
is a topological isomorphism if $q \in [2,p[$.
\end{lemma}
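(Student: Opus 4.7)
The proof is based on a symmetrization/antisymmetrization procedure with respect to the reflection $\iota$ in the first coordinate. Note that $\Sigma_1$ lies in $\{y_1=0\}$ and is therefore fixed pointwise by $\iota$, while all the cubes and all admissible choices of $M$ in the statement are $\iota$-invariant. Combined with the hypothesis $\varrho(\iota(\cdot))=\iota\,\varrho(\cdot)\,\iota$, a direct application of the transformation formula \eqref{e-transfromul} of Proposition~\ref{p-transform} shows that the pullback by $\iota$ preserves the bilinear form associated with $-\nabla\cdot\varrho\nabla$. Decomposing $u\in W^{1,2}_E(\Lambda)$ and $f\in W^{-1,q}_E(\Lambda)$ into their $\iota$-symmetric and $\iota$-antisymmetric parts, $u=u_s+u_a$ and $f=f_s+f_a$, the equation $-\nabla\cdot\varrho\nabla u=f$ therefore decouples into two independent equations $-\nabla\cdot\varrho\nabla u_s=f_s$ and $-\nabla\cdot\varrho\nabla u_a=f_a$ on $\Lambda$.

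Next, I restrict each part to $\Lambda^+:=\Lambda\cap\mathfrak H_1^+$, which equals $\mathfrak C_+$ in the first case and $\mathfrak Q_+$ in the remaining three. On the new boundary face $F:=\{y_1=0\}\cap\overline{\Lambda^+}$ the restriction $u_s|_{\Lambda^+}$ inherits Dirichlet on $\Sigma_1\cap F$ and carries the homogeneous Neumann condition on $F\setminus\Sigma_1$ (since $\partial_{y_1}u_s$ is antisymmetric in $y_1$ and hence vanishes on $F$), whereas $u_a|_{\Lambda^+}$ carries Dirichlet on all of $F$. On the remaining faces of $\Lambda^+$ both parts inherit the original Dirichlet/Neumann data from $E$ and $M$. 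Symmetric (resp. antisymmetric) extension across $\{y_1=0\}$ is a bounded inverse of the corresponding restriction on the Sobolev scale, so $W^{1,q}$-regularity of the half-domain parts is equivalent to $W^{1,q}$-regularity of $u_s$ and $u_a$, hence of $u$.

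One now inspects the four sub-cases and recognizes, after if necessary composing with $\iota$ in order to pass from $\mathfrak C_+$, $\mathfrak Q_+$ to $\mathfrak C_-$, $\mathfrak Q_-$, that both resulting half-domain problems fall into the scope of the model results already at hand: Proposition~\ref{p-graphdomain} and Proposition~\ref{p-convexhetero} take care of pure Dirichlet problems on a Lipschitz graph/polyhedral domain with constant or piecewise constant coefficient; Corollary~\ref{e-corspiegel} covers the case of Neumann data on the whole top face; Corollary~\ref{c-Neumannkant} handles the configurations \eqref{e-M7} and \eqref{e-M8}; and Lemma~\ref{c-modifymodel} provides the remaining piecewise-constant cases. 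The only sub-problem not matching those verbatim is the symmetric part in the case $\Lambda=\mathfrak Q\setminus\Sigma_1$ with $M=\emptyset$, whose Neumann set on $\mathfrak Q_+$ is $\{0\}\times{]0,1[}\times{]-1,0[}$ alone; this is reduced to Lemma~\ref{c-modifymodel} ii) by one further antisymmetric reflection across the top face $\{y_3=0\}$, which is admissible since $\varrho$ is constant on $\mathfrak Q_+$. Taking the minimum of the finitely many exponents $p>3$ produced by these model results yields the desired $p$.

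The principal obstacle is the careful case-by-case matching of the reduced half-domain problems with the previously established models: each has to be identified up to an affine bi-Lipschitz (in fact $C^1$) transformation, paying special attention to how the coefficient function transforms under reflection and to the one sub-case requiring an additional antisymmetric extension through a Dirichlet face.
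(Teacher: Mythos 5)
Your proof takes essentially the same route as the paper: decompose into $\iota$-symmetric and $\iota$-antisymmetric parts via the transformation invariance $\varrho(\iota(\cdot))=\iota\varrho(\cdot)\iota$, restrict each part to a half-domain (the paper uses $\Lambda_-$ where you use $\Lambda^+$, which is immaterial), identify the resulting Dirichlet/Neumann configurations, and match each to the model results from Propositions~\ref{p-graphdomain}, \ref{p-convexhetero}, Corollaries~\ref{e-corspiegel}, \ref{c-Neumannkant}, and Lemma~\ref{c-modifymodel}. You correctly flag that the symmetric part in the case $\Lambda=\mathfrak Q\setminus\Sigma_1$, $M=\emptyset$ (the paper's configuration \eqref{e-possi2}, a quarter-cube with Neumann set $\{0\}\times{]0,1[}\times{]-1,0[}$) does not match Lemma~\ref{c-modifymodel}\,ii) verbatim, since that lemma lives on $\mathfrak C_-$; the paper simply asserts the coverage, whereas your additional odd reflection across $\{y_3=0\}$ (admissible because $\varrho$ is constant there and the face is Dirichlet) makes the reduction explicit and is the natural way to close this step.
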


\begin{proof}
Let us first note that the invariance property $ \varrho(\iota(\cdot )) =\iota \varrho(\cdot) \iota$
implies that $\rho$ is also constant on $\mathfrak Q_-$, if $\Lambda = 
\mathfrak Q \setminus \Sigma_1$, and $\rho$ is also constant on the sets
$\mathfrak C_- \cap \mathfrak H_3^\pm$, if $\Lambda = \mathfrak C \setminus \Sigma_1$.

Define $\Psi:L^2(\Lambda) \to L^2(\Lambda)$ by $( {\Psi}w) (\mathrm z)
=w(\iota(\z))$. 
Since $\Lambda$, $M$ and $\Sigma_1$ are invariant under $\iota$, 
$\Psi$ induces topological isomorphisms
$ {\Psi_q}:W^{1,q}_E(\Lambda) \to W^{1,q}_E(\Lambda)$  for all $q \in [1,\infty[$, cf.
 Proposition \ref {p-transform}. 
 Define furthermmore $ \Psi_2^*:W_E^{-1,2}(\Lambda) \to W_E^{-1,2}(\Lambda)$ as the adjoint of 
$ \Psi_2:W^{1,2}_E(\Lambda) \to W^{1,2}_E(\Lambda) $.
Assume that $u \in W^{1,2}_E(\Lambda)$ is a solution of $-\nabla \cdot \varrho \nabla u=f \in  
 W_E^{-1,2}(\Lambda)$. Then one may apply Proposition \ref {p-transform} for $\phi=\iota$. 
The matrix equality $\iota \varrho(\iota(\cdot)) \iota =\varrho(\cdot)$ yields $-\nabla \cdot 
\varrho \nabla  \Psi_2 u =  \Psi_2^* f$, and this implies that
\begin{equation} \label{e-gleich1}
-\nabla \cdot \varrho \nabla (u - \Psi_2 u)=f - \Psi_2^* f,
\end{equation}
and
\begin{equation} \label{e-gleich2}
-\nabla \cdot \varrho \nabla (u +  \Psi_2 u)=f +  \Psi_2^*  f.
\end{equation}

First consider \eqref{e-gleich1}. It is clear that the function $u -  \Psi_2 u$ has trace $0$ on the set 
\begin{equation} \label{e-xi}
\Xi:=\overline \Lambda \cap \{\y: y_1=0  \}.
\end{equation}
Denote the restriction of $u - \Psi_2 u$ to the sets $\Lambda_\pm :=\Lambda \cap
\mathfrak H_1^\pm$ by $v_\pm$. We define 
\[
E_\pm:= (E \cap \mathfrak H_1^\pm) \cup ( \partial \Lambda_\pm \cap \{\mathrm z:z_1 =0\}).
\]
Since the two domains $\Lambda_\pm$ are separated by the Dirichlet boundary part of both sets, we 
may apply Lemma \ref{l-031} and end up with \emph{separate} equations on both sets.
Assume from now  on that $q \in [2,\infty[$ and $f \in W^{-1,q}_E(\Lambda)$. This implies $ f_\pm 
\in  W^{-1,q}_{E_\pm}(\Lambda_\pm)$, see \eqref{e-deffj}. 

In case  $\Lambda = \mathfrak C \setminus \Sigma_1$ one has 
then $E_\pm =\partial \Lambda _\pm$,  see Figure 7.
\begin{figure}[h]
    \centerline{\includegraphics[scale=0.9]{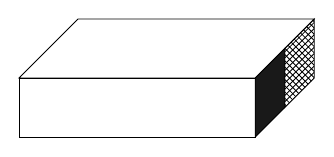}}
     \caption{The case $\Lambda = \mathfrak C\setminus \Sigma_1$, $M=\emptyset$. The figure shows  $\mathfrak C_-$  with the surface $\Sigma_1$ from $D^\parallel$  in black. 
The whole boundary is Dirichlet, on the crosshatched part due to antisymmetric reflection. 
The case  $\Lambda=\mathfrak Q\setminus \Sigma_1$, $M=\emptyset$ gives an analogous picture for $\mathfrak Q_-$.  }
  \end{figure}

 Hence, one may apply Proposition \ref{p-convexhetero}. It tells
us that $v_\pm \in W^{1,q}_0(\Lambda_\pm)$, if $f \in W^{-1,q}(\Lambda_\pm)$ for all $q \in [2,p[$ for a certain
$p >3$. Since the traces of $v_\pm$ coincide on the common frontier of $\Lambda_\pm$, this even gives
$u - \Psi_2 u \in W^{1,q}_0(\Lambda)$ for the same range of $q$'s, cf. Lemma \ref{l-0x1}.
If $\Lambda =\mathfrak Q \setminus \Sigma_1$ and $M$ is empty, one concludes in the same way.

Consider now the case $\Lambda =\mathfrak Q \setminus \Sigma_1$ with $M$ being one of the sets
in \eqref{e-M}. In principle,  one can argue here as before -- with one difference: The occurring Neumann boundary parts 
$M_\pm:=M \cap \mathfrak H_1^\pm$ are nontrivial. Nevertheless, if defining $E_\pm :=
\partial \Lambda_\pm \setminus M_\pm$ and restricting the problem
to each of the two components $\Lambda_\pm$, then the resulting setting fits into
Corollary \ref{e-corspiegel}, if $M$ is the first set in \eqref{e-M},  and into 
Corollary \ref{c-Neumannkant} if $M$ equals the second set in \eqref{e-M}.
This gives $u- \Psi_2u \in W^{1,q}_{E_\pm }(\Lambda_\pm)$ for $q \in [2,p[$, with $p >3$.
Since both traces on the common interface $\partial \Lambda_+ \cap \partial \Lambda_-$ are zero, 
$u -\Psi_2u$ even belongs to $W^{1,q}_{E}(\Lambda)$ for this same range of $q$'s, thanks to 
Lemma \ref{l-0x1}. 

Next consider  \eqref{e-gleich2}, first assuming $f \in L^2(\Lambda)
\hookrightarrow W^{-1,q}_E(\Lambda)$ for $q \in [2,6]$. Then $ \Psi_2^* f=\Psi f$, and both 
$v:=u +  \Psi_2 u$ and $g:=f + \Psi f$ are invariant under $\Psi$.
Let us establish a corresponding equation for $v |_{\Lambda_-}$:  Let 
\begin{equation} \label{e-posssibilities}
M_-= (M \cap \overline {\mathfrak H_1^-}) \cup (\Lambda \cap \{\mathrm z:z_1=0 \})
 \subset \partial \Lambda _-, \  E_-:=\partial \Lambda_- \setminus M_-.
\end{equation}
Note that $M_-$ is then \emph{open} in 
$\partial \Lambda_-$.
For $w \in W^{1,r}_{E_-}(\Lambda_-)$ ($r \in ]1,\infty [$) we define
$\widehat w$ on $\Lambda$ by 
\begin{equation} \label{e-defreflect}
\widehat w(\y) = \begin{cases} 
w(\y), & \y \in \Lambda_-\\
w(\iota(\y)),&\y \in \Lambda_+\\
{\rm tr} \,w,  &\y\in \Lambda \cap \{\mathrm z:z_1=0 \},
\end{cases}
\end{equation}
where $\rm tr$ denotes the corresponding trace.
One knows that $\widehat w \in W^{1,r}_E(\Lambda)$, if $w \in W^{1,r}_{E_-}(\Lambda_-)$, cf. 
\cite[Proposition  4.4]{TomJo}.
Thus, \eqref{e-gleich2} in combination with \eqref{e-defreflect} yields
\begin{eqnarray} 
 \label{e-syymetri}
\lefteqn{2 \int_{\Lambda_-}\varrho \nabla v \cdot \nabla w \, d\y = \int _\Lambda \varrho 
\nabla v \cdot \nabla \widehat w \, d\y}\\
&= \langle -\nabla \varrho \nabla v, \widehat w\rangle  = \int_\Lambda g \widehat w \, dx = 2 \int_{\Lambda_-} g w \,
 d\y.  \nonumber
\end{eqnarray}
Moreover, Lemma \ref{l-031} tells us that $v \in W^{1,2}_E(\Lambda)$ implies $v|_{\Lambda_-} 
\in W^{1,2}_{E_-}(\Lambda_-)$. Consequently, \eqref{e-syymetri} can be interpreted as the weak formulation of the equation 
\[
-\nabla \cdot \varrho \nabla  v|_{\Lambda_-}=g|_{\Lambda_-}.
\]
Concerning the constellations admitted in the assumptions, \eqref{e-posssibilities} allows for the following possibilities:
\begin{itemize}
\item If $M=\emptyset$:
\begin{equation} \label{e-possi1}
\Lambda_- = \mathfrak C_-, \quad M_-= \{0\} \times ]0,1[ \times ]-1,1[,
\end{equation}
\begin{equation} \label{e-possi2}
\Lambda_- = \mathfrak Q_-, \quad M_-= \{0\} \times ]0,1[ \times ]-1,0[
\end{equation}
\begin{figure}[h]
    \centerline{\includegraphics[scale=0.9]{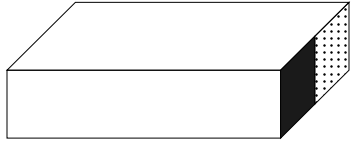}}
     \caption{$\mathfrak Q_-$ with the dotted Neumann part resulting from symmetric reflection. The rest is Dirichlet boundary, the black part resulting from $D^\parallel$. }
\end{figure}
\item If $M =\bigl ( ]-1,1[ \times ]-1,1[ \times \{0\}\bigr ) \setminus \Sigma_1$:
\begin{equation} \label{e-possi3}
\Lambda_-= \mathfrak Q_-, \quad M_-=  ]-1,0[ \times ]-1,1[ \times \{0\} \cup \{0\} \times ]0,1[ \times ]-1,0[ ,
\end{equation}
\item If $ M = ]-1,1[ \times ]0,1[ \times \{0\}$:
\begin{equation} \label{e-possi4}
\Lambda_-=\mathfrak Q_-, \quad M_-= (]-1,0[ \times ]0,1[ \times \{0\}) \cup (\{0\} \times ]0,1[ \times ]-1,0])
\end{equation}
\end{itemize} 
All these settings are included in our regularity results: \eqref{e-possi1} and   
\eqref{e-possi2} are covered by Lemma \ref{c-modifymodel} ii).
\eqref{e-possi3} is treated in Corollary \ref{c-Neumannkant} and \eqref{e-possi4} is treated in
Corollary \ref{c-modifymodel} i). Thus, in any case, $ v|_{\Lambda_-}$ admits the estimate 
\[
\| v|_{\Lambda_-}\|_{W^{1,q}_{E_-}(\Lambda_-)} \le c \|g|_{\Lambda_-}\|_{W^{-1,q}_{E_-}(\Lambda_-)}
\le  c \|g \|_{W_E^{-1,q}(\Lambda)}.
\]
Since $v \in W^{1,2}_E(\Lambda)$ is invariant under $ \Psi_2$,  $\| v \|_{W^{1,q}_{E}(\Lambda)}
=2\| v|_{\Lambda_-}\|_{W^{1,q}_{E_-}(\Lambda_-)}$, and we obtain the estimate
\begin{equation} \label{e-absdch}
\| v\|_{W^{1,q}_E(\Lambda)} \le  c \|g \|_{W_E^{-1,q}(\Lambda)}
\end{equation}
for a suitable constant $c$ and $q \in [2,p[$,  if $p>3$ is sufficiently close to $3$. 
The operator $ \Psi_{2}^*$ transforms $W_E^{-1,q}(\Lambda)$ continuously into itself, 
and $L^2(\Lambda)$ is dense in $W_E^{-1,q}(\Lambda)$, so
\eqref{e-absdch} implies $u + \Psi_2 u \in W_E^{1,q}(\Lambda)$ \emph{for all} $f \in W_E^{-1,q}(\Lambda)$. 
Together with the discussion of \eqref{e-gleich1} this yields
$u \in W^{1,q}_E(\Lambda)$, if $f \in W_E^{-1,q}(\Lambda)$. Thus, the assertion 
is obtained from the open mapping theorem.
\end{proof}

It follows the final step of the proof of Theorem \ref{t-main}: Since the cases  $\x \in D^\parallel_d$ and 
$\x \in R_d$ are already established, the remaining ones are $\x \in D^\parallel_c$ and $\x \in R_c$.
With a slight change of notation define $U_{\x}$ by: 
\begin{equation} \label{e-defUx}
U_\x = \begin{cases} 
\widetilde U_\x \; \text{as in Lemma \ref{c-transform2}},  &\x \in D^\parallel_c\\
U_\x\; \text {as in Assumption } \ref{d-Rout},&\x \in R_c.
\end{cases}
\end{equation}
 We next apply Lemma \ref{l-cutoff} with $V= U_\x$. So, for $\eta \in C^\infty_0(U_\x)$, one gets
 $\eta u \in W^{1,2}_{D_\x}(\Omega_\x)$ with $\Omega_\x :=\Omega \cap  U_\x$
and 
\[
D_\x= \begin{cases}
\partial \Omega_\x, & \x \in D^\parallel_c  \\
\partial \Omega_\x \setminus \bigl ( U_\x \cap (\partial \Omega \setminus D)
\bigr ),
&\x \in R_c.
\end{cases}
\]
 Moreover,  Lemma \ref{l-cutoff} tells us that the function
 $\eta u|_{\Omega \cap U_\x}=:u_\bullet$ satisfies 
\[
-\nabla \cdot \mu_\bullet \nabla u_\bullet = f_\bullet \in W_{D_\x}^{-1,q}(\Omega_\x),
\quad \mu_\bullet := \mu|_{\Omega_\x}
\]
with $\| f_\bullet \|_{W_{D_\x}^{-1,q}(\Omega \cap  U_\x)} \le c 
\|f\|_{W^{1,q}_D(\Omega)}$, $c$ independent of $f$.\\
After passing to this localized problem, we employ now the transformation principle, Proposition \ref{p-transform}.
Then we are almost  in the situation of Lemma \ref{l-multiple1}: The geometries one has to treat are 
exactly those. The difference is that the occurring coefficient functions are not constant on the corresponding subsets
up to now. But they have limits for $\z \to 0$: For $\x \in D^\parallel_c$ this limit exists, due to Proposition 
\ref{p-transform} and Lemma \ref{c-transform2} separately in $\Lambda \cap \mathfrak H^\pm_3$.
Modifying the coefficient function by taking it to be the corresponding limit, which is indeed constant  on $\Lambda \cap \mathfrak H^\pm_3$,
we can apply   Lemma \ref{l-multiple1} in order to obtain the regularity property $W^{1,q}$ for 
this modified coefficient function with a $q >3$. Having this at hand, we can finish the proof for the case $\x \in D^\parallel_c$ and $\Lambda =
\mathfrak C \setminus \Sigma_1$ by  applying the 
`comparison' result, Lemma \ref{l-localize}.
For $\x \in R_c$ and $\Lambda = \mathfrak Q \setminus \Sigma_1$
the argument is the same.

\section[Concluding remarks]
{Concluding remarks}\label{s-conclremarks} 
\renewcommand{\labelenumi}{\roman{enumi})}
\begin{enumerate}
\item \label{r-stoerung}
The condition \eqref{e-transiota} in Assumption  \ref{d-Rout} ii) can be perturbed slightly. This means that one
can add to a coefficient function, fulfilling this condition, another one which is sufficiently small in the $L^\infty$-norm,
and the main result (Theorem \ref{t-main}) remains true. This follows by straight forward perturbation arguments,  
resting on \eqref{e-opnormgegenLinftiy}.\\
We have tried hard, resting on this argument and gauging, to avoid  Assumption \ref{d-Rout} ii)  at all, since it is really a severe restriction concerning the admissable configurations. Unfortunately, all these attempts have failed.

\item \label{r-away}
It is not by accident that we had only points from $\overline {D^\parallel}$ under consideration here.
 If other boundary points are involved, then one can apply the (local) regularity results obtained in \cite{disser}.
 Moreover, Theorem \ref{t-groeger_local_global} then allows to deduce a \emph{global} regularity result within the $W^{1,q}$-scale.

\item \label{r-2d}
Quite similar results are obtained in two space dimensions, this time under much more general conditions.
Namely, if $D\subset \overline \Omega$ is a closed $1$-set and there is a continuous extension operator
$\mathfrak E:W^{1,q}_D(\Omega) \to  W^{1,q}_D(\R^d)$, then
\[
-\nabla \cdot \mu \nabla +1:W^{1,q}_D(\Omega) \to :W^{-1,q}_D(\Omega)
\]
is a toplogical isomorphism for $q \in ]2-\delta, 2 + \delta[$ and some $\delta >0$, see \cite{jons}.

\end{enumerate}

{\bf Acknowledgment} We thank Moritz Egert (Darmstadt) for many helpful discussions. 
\\

\providecommand{\bysame}{\leavevmode\hbox to3em{\hrulefill}\thinspace}

\end{document}